\newtheorem{theorem}{Theorem}[section]
\newtheorem{lemma}[theorem]{Lemma}
\newtheorem{proposition}[theorem]{Proposition}
\theoremstyle{definition}
\newtheorem{definition}[theorem]{Definition}
\newtheorem{remark}[theorem]{Remark}
\DeclareMathOperator{\rank}{rank}
\DeclareMathOperator{\supp}{supp}
\DeclarePairedDelimiter{\abs}{\lvert}{\rvert}
\DeclarePairedDelimiter{\scalprod}{\langle}{\rangle}
\newcommand{\vertiii}[1]{{\left\vert\kern-0.25ex\left\vert\kern-0.25ex\left\vert #1 
        \right\vert\kern-0.25ex\right\vert\kern-0.25ex\right\vert}}
\newcommand{\vertii}[1]{{\left\vert\kern-0.25ex\left\vert #1 
        \right\vert\kern-0.25ex\right\vert}}
\newcommand{\suchthat}{\ifnum\currentgrouptype=16 \mathrel{}\middle|\mathrel{}\else\mid\fi}
\theoremstyle{definition}
\begin{document}

\begin{frontmatter}



\title{Left-coprimeness condition for the reachability in finite time of pseudo-rational systems of order zero with an application to difference delay systems\tnoteref{tf} } 

%

\author{S\'ebastien Fueyo\corref{ca}} 

\cortext[ca]{corresponding author}


\affiliation{organization={Univ. Grenoble Alpes, Inria, CNRS, Grenoble INP, GIPSA-Lab},
            city={Grenoble},
            postcode={F-38000}, 
            country={France}}

\ead{sebastien.fueyo@gipsa-lab.fr}
            
\begin{abstract}
This paper investigates the finite-time reachability of pseudo-rational systems of order zero. A bound on the minimal time for reachability is derived, and the reachability property for integrable functions is characterized using a left-coprimeness condition. The results are further applied to difference delay systems with distributed delays.
\end{abstract}



\begin{keyword}
pseudo-rationality, reachability, left-coprimeness, minimal time of reachability, difference delay systems, distributed delay systems, controllability
%
%
\end{keyword}

\end{frontmatter}

\section{Introduction}

\textbf{Pseudo-rational systems and controllability of delay equations.} The study of so-called pseudo-rational input-output systems, a concept first introduced by Y. Yamamoto in \cite{YamamotoRealization}, is important because it represents a large class of systems which contains in particular delay systems; the latter frequently appearing in a wide range of applications such as electronic engineering \cite{brayton1968small} and biology \cite{belair1987model}. In particular, the control and stabilization properties of delay systems have been extensively studied in the literature \cite{Manitius,connor,Jacobs,hale2002stabilization}, with a primary focus on neutral differential delay equations:
 \begin{equation}
    \label{eq:2}
    \textstyle\frac{\mathrm{d}}{\mathrm{d}t} \left(x(t)- \sum\limits_{j=1}^N A_j \,x(t-\Lambda_j)\right)
    =\sum\limits_{j=1}^NC_j\,x(t-\Lambda_j)+Bu(t),\qquad t\geq 0.
  \end{equation}
The state \( x(t) \) and the control \( u(t) \) belong to \( \mathbb{R}^d \) and \( \mathbb{R}^m \), respectively, where \( d \) and \( m \) are positive integers, and the system's delays are distinct and ordered as \( 0 < \Lambda_1 < \dotsc < \Lambda_N \). Equation~\eqref{eq:2} is a generalization of the retarded differential delay systems ($A_j \equiv 0$, $\forall j \in \{1,\cdots,N\}$), see the book \cite{Hale} for the classification of delay systems.

Neutral functional differential equations, being inherently infinite-dimensional, give rise to various concepts of controllability, including approximate, exact, and relative controllability. These concepts can be further refined depending on whether controllability is expected to occur within a uniform time or not. Manitius \cite{Manitius} established approximate controllability criteria for retarded differential delay systems in the frequency domain. These criteria are of the Hautus type, \textit{i.e.}, they follow the Hautus criterion for finite-dimensional systems \cite{hautus1969controllability}. O'Connor and Tarn \cite{connor} extended this result to neutral differential equations with a single delay, but their approach was limited to that specific case.
Yamamoto \cite{yamamoto1989reachability} later proposed a framework for neutral control systems with multiple delays by interpreting them as pseudo-rational systems. This approach enabled him to generalize the frequency domain criteria for approximate controllability obtained by \cite{connor}.  However, the exact controllability of systems like \eqref{eq:2} with multiple delays remains largely unexplored in the literature.

Pseudo-rational systems were originally introduced in \cite{YamamotoRealization}. Throughout this paper, the term "reachability" will often replace "controllability" to align with realization theory and input-output systems terminology, although the two terms are equivalent \cite{yamamoto1989reachability}. A pseudo-rational system is said to be \textit{reachable or quasi-reachable} if, for any target belonging in a certain space, there is an input that allows it to be reached or approached. Furthermore, a pseudo-rational system is \textit{reachable or quasi-reachable in finite time $T>0$} if we can reach all the targets with inputs applied during a time $T$. Definitions of quasi-reachability and reachability for pseudo-rational systems discussed in this article are formalized in Definition~\ref{def_reachability}. The infimum of the positive real $T$ satisfying the property above is called \textit{the minimal time of reachability or quasi-reachability}.

The article \cite{YamamotoRealization} characterized the reachability of pseudo-rational systems using a left-coprimeness condition, which was shown to be both necessary and sufficient when the targets belong to the function space of square-integrable functions $L^2$ with respect to the Lebesgue measure. A (approximate) left-coprimeness condition is also known as a (approximate) Bézout identity in control theory. Remarkably, (approximate) controllability can be characterized through the solvability of a (approximate) Bézout identity using input-output systems.
In addition, algebraic and frequency domain criteria for quasi-reachability are stated in the paper \cite{yamamoto1989reachability} by exploiting the left-coprimeness. The results are applied to neutral differential delay systems \eqref{eq:2} to obtain the approximate controllability frequency domain criteria mentioned above. On the other hand, reachability has been less studied, and the most complete exposition in the literature is given in \cite{YAMAMOTO201620} for scalar pseudo-rational systems. A necessary and sufficient reachability criterion for targets belonging to a distribution space, based on a left-coprimeness condition, is given, as well as a frequency domain criterion. However, the link between the reachability notion and the targets belonging to function spaces is not investigated. Unfortunately, it is the reachability in function spaces which is of interest for practical applications. Moreover, quasi-reachability or reachability in finite time has not been properly studied.

\medskip

\textbf{Reachability: pseudo-rational systems of order zero, difference delay systems and 1-D hyperbolic PDEs.} Recently, \cite{Chitour2020Approximate} reinvested the study of the so called difference delay systems of the form

\begin{equation}
\label{eq:diff_delay_without}
   x(t)= \sum_{j=1}^N A_j \,x(t-\Lambda_j)
     +Bu(t),\qquad t\geq 0.
  \end{equation}
These systems are of particular interest due to their connections with one-dimensional (1-D) hyperbolic partial differential equations (PDEs), such as linear conservation laws. Through the method of characteristics \cite{baratchart,bastin2016stability,Chitour2016Stability,CoNg}, these PDEs can be transformed into the form~\eqref{eq:diff_delay_without}, resulting in a system where the matrices $A_i$ exhibit a specific structure, notably all being rank-one matrices. Studying difference delay systems has been crucial for deriving properties, such as the stability and the controllability, for 1-D hyperbolic PDEs.
  
  The main results of \cite{Chitour2020Approximate}  regarding the exact controllability of \eqref{eq:diff_delay_without} include an algebraic criterion for exact controllability in $L^2$, but only for a low-dimensional state space and a single input. Additionally, they fully characterized the minimal controllability time. However, the methods used, based on observability inequalities, were not suitable for higher dimensions ($d>2$) with multiple inputs ($m>1$). This limitation led to renewed interest in algebraic methods from realization theory and input-output systems, see \cite{chitour:hal-03827918}. Specifically, System~\eqref{eq:diff_delay_without} can be interpreted as a pseudo-rational system of order zero, defined by the absence of derivatives in the equation (unlike a neutral system), see Definition~\ref{def_pseudo_rational}. In \cite{chitour:hal-03827918}, the detailed analysis of pseudo-rational systems of order zero in the context of controllability for difference delay systems \eqref{eq:diff_delay_without} led to the following three significant results:
  \begin{enumerate}
      \item Frequency domain criteria for approximate controllability derived from \cite{yamamoto1989reachability}.
      \item An upper bound on the minimal time of controllability, given by $d \Lambda_N$, where $\Lambda_N$ is the largest delay and $d$ is the dimension of the vector $x$.
      \item Characterization of $L^1$ exact controllability through a left-coprimeness condition, where $L^1$ denotes a space of integrable functions.
  \end{enumerate}
In \cite{fueyo-chitour}, the left-coprimeness condition was solved to establish $L^1$ frequency domain criteria for exact controllability, while \cite{chitour:hal-04228797} addressed controllability issues for certain 1-D hyperbolic PDEs. Notably, the Hautus criteria derived differ from those for abstract differential equations \cite{Tucsnak_Weiss}, as they are not based on abstract differential representations of System~\eqref{eq:diff_delay_without}.

\medskip

\textbf{Results of the paper and comments.} This paper investigates the minimal time of reachability for pseudo-rational systems of order zero, with a focus on targets that belong to function spaces. A key aspect of the paper is the application of these systems to difference delay systems with distributed delays, which are defined by the following equation:
\begin{equation}
\label{system_lin_formel2bis}
 x(t)=\sum_{j=1}^NA_jx(t-\Lambda_j)+\int_0^{\Lambda_N} g(s) x(t-s)ds+Bu(t) , \qquad t \ge 0.
\end{equation}
When $g \equiv 0$, System~\eqref{system_lin_formel2bis} reduces to a difference delay system \eqref{eq:diff_delay_without}. The introduction of distributed delays renders the computational methodology used in \cite{chitour:hal-03827918} ineffective due to the presence of the integral. It is also worth noting that System~\eqref{system_lin_formel2bis} is of particular interest for hyperbolic systems \cite{auriol2019explicit}. 

The major results of the paper are threefold. First, we prove that quasi-reachability and reachability are equivalent to their quasi-reachability and reachability in finite time, and we are able to provide an upper bound on the minimal time of reachability, see Theorem~\ref{bound_minimal_time_controllability} in Section~\ref{sec_min_time}. A similar result has been obtained in the scalar case ($m=d=1$), see \cite{YAMAMOTO2021353}. Theorem~\ref{bound_minimal_time_controllability}  differs in that it considers the matrix case ($m,d>1$).

The second result, presented in Theorem~\ref{th_coprm_L1} of Section~\ref{sec4}, gives a necessary and sufficient condition for reachability, using a left-coprimeness property, when the targets belong to \(L^1\). This result, based on the minimal time of reachability, is new from an algebraic perspective, as reachability has previously been studied when the input and output belong to distribution spaces \cite{kamen1976,YAMAMOTO201620}. Theorem~\ref{th_coprm_L1} is of importance because it provides a finer characterization of reachability in function spaces, as compared to distribution spaces.
In addition, the left-coprimeness condition provides an effective procedure to construct inputs that reach any target in the function space \( L^1 \); see Remark~\ref{remark_build2}.

The approach of pseudo-rational systems of order zero provides an upper bound on the minimal time for the quasi-reachability and reachability of difference delay systems with distributed delays, as described in equation \eqref{system_lin_formel2bis}, with the bound being \( d \Lambda_N \) (see Theorem~\ref{theorem_reachability_dis_delay}). In addition, it offers a necessary criterion for reachability in the frequency domain, of the Hautus type, for targets in \( L^1 \), as shown in Theorem~\ref{th:main_result_dist}. Specifically, Theorem~\ref{theorem_reachability_dis_delay} and Theorem~\ref{th:main_result_dist} generalize the results of \cite{chitour:hal-03827918} to the more complex case of distributed delays in difference delay systems. Furthermore, the upper bound \( d\Lambda_N \) is directly derived from Theorem~\ref{bound_minimal_time_controllability}, whose proof is notably simpler than the one in \cite{chitour:hal-03827918}. The frequency domain criterion for reachability presented in Theorem~\ref{th:main_result_dist} does not impose any rational dependence assumption on the delay, which is a common assumption in time-delay system theory. To conclude, whether the necessary frequency domain criterion for \( L^1 \) reachability is also sufficient remains an open question, as resolving this would require addressing a corona problem that has yet to be solved in the literature.

\medskip

\textbf{Organization of the paper.} The remaining structure of the paper is as follows. Section~\ref{sec:not2} introduces the notation and fundamental concepts, while Section~\ref{sec:pseudo-rational3} defines pseudo-rational systems of order zero and explores their key properties. In Section~\ref{sec_min_time}, we investigate the concept of minimal time of reachability and its implications. Section~\ref{sec4} focuses on the left-coprimeness conditions for reachability, establishing their connection to function spaces. Finally, in Section~\ref{sec:appli_delay7}, we apply the theoretical results to difference delay systems with distributed delays, demonstrating the broader applicability of our findings.

\section{Notation}
\label{sec:not2}
The sets of natural numbers, real numbers and
complex numbers are denoted by $\mathbb{N},\mathbb{R}$ and $\mathbb{C}$. For $p \in \mathbb{C}$, $\Re(p)$ denotes the real part of $p$. The subspaces $\mathbb{R}_-$ and $\mathbb{R}_+$ of $\mathbb{R}$ stand for the intervals $(-\infty,0]$ and $[0,+\infty)$. For an integer $n$, $g \in \mathbb{C}^n$ defines a column vector, and we use $g^T$ to denote its transpose, which represents a row vector. The transpose here is simply a practical tool to express row vectors instead of column vectors and should not be confused with the conjugate transpose.  We endow $\mathbb{C}^n$ with any norm $\|\cdot\|$. Since all norms are equivalent in finite dimensions, the choice of a specific norm is irrelevant for the convergence in the function spaces considered below. For all square matrix $F$ with values in a ring, we use $\det(F)$ for the determinant of $F$ and $I_d$ denotes the identity matrix. For a function $f$ defined in $\mathbb{R}$, we denote by $\pi(f):=f_{|\mathbb{R}_+}$ the truncation of the function $f$ to non-negative times. 

For $q \in [1,+\infty)$ and $K$ a compact of $\mathbb{R}$,
$L^q(K,\mathbb{R})$ represents the set of Lebesgue measurable functions for which the $q$-th power of the absolute value is Lebesgue integrable, i.e.
\[
\|\phi \|_{K,q}:=\left(\int_K \abs{\phi(t)}^q dt  \right)^{1/q}<+\infty,\qquad \phi \in L^q(K,\mathbb{R}).
\]
In the remaining of the paper, we interpret the space $L^q(K,\mathbb{R})$ as a subspace of $L^q$-functions on $\mathbb{R}$ that vanish outside the set $K$. We write $L^q_{\rm loc}(\mathbb{K},\mathbb{R})$ the spaces of $q$-th integrable functions on each compact of $\mathbb{K}$, where $\mathbb{K}=\mathbb{R}_-$ or $\mathbb{R}_+$. The family of the semi-norms $\|\cdot\|_{K,q}$, for $K$ compact interval of $\mathbb{R}$, induces a topology on $L^q_{\rm loc}(\mathbb{K},\mathbb{R})$ which is then a Fréchet space. For some references on Fréchet space, see \cite[Definition 2.4, p.106]{conway2019course}. Let $\Omega_q:= \bigcup_{i=1}^{\infty} L^q([-i,0],\mathbb{R})$ endowed with the topology of the strict inductive limit of the sequence $\{L^q([-i,0],\mathbb{R}) \}_{i=1}^{\infty}$. The space $\Omega_q$ can be interpreted as the functions belonging to $L^q_{\rm loc}(\mathbb{R}_-,\mathbb{R})$ and having compact supports contained in $\mathbb{R}_-$, see \cite{YamamotoRealization} and \cite[appendix]{Yutaka_Yamamoto}.

For the fundamental concepts of the  theory of distributions, the reader may refer to Schwartz's book \cite{schwartz1966theorie}. Let $\mathcal{D}'(\mathbb{R})$ the space of distributions on $\mathbb{R}$ endowed with its usual topology. The support of a distribution always exists \cite[p.28]{schwartz1966theorie}, and for \( \alpha \in \mathcal{D}'(\mathbb{R}) \), \( \mathrm{supp}(\alpha) \) denotes the support of \( \alpha \). By the completeness of the real numbers, since the support of a distribution is a subset of \( \mathbb{R} \), its infimum necessarily exists (though it can be equal to \( -\infty \)). Thus, for \( \alpha \in \mathcal{D}'(\mathbb{R}) \), we denote the infimum of the support of \( \alpha \) by
\begin{equation}
\label{def_alpha}
l(\alpha) := \inf \, \mathrm{supp} (\alpha).
\end{equation}
In particular, the support of a distribution coincides with the support of a function, so that for a function \( f : \mathbb{R} \to \mathbb{R} \), we have \( l(f) \), which is the infimum of the support of \( f \).

The canonical duality pairing between a distribution in $\mathcal{D}'(\mathbb{R})$ and a function in $\mathcal{D}(\mathbb{R})$ is denoted $\scalprod{.,.}$, where $\mathcal{D}(\mathbb{R})$ is the space of $C^{\infty}$-functions defined on $\mathbb{R}$ with the usual topology. The convolution between two distributions, when it exists, is denoted by $*$ and the Laplace transform of a distribution $\alpha \in \mathcal{D}'(\mathbb{R})$ is written $\hat{\alpha}$.
The subspaces of $\mathcal{D}'(\mathbb{R})$, denoted $\mathcal{E}'(\mathbb{R}_-)$ and $\mathcal{E}'(\mathbb{R}_+)$, stand for the distributions with compact support included in $\mathbb{R}_-$ and  $\mathbb{R}_+$. The distributions of order zero with compact support in $\mathbb{R}_-$ and support bounded on the left are denoted by $M(\mathbb{R}_-)$ and $M_+(\mathbb{R})$ respectively. The distributions of order zero are easily identified with Radon measures by the Riesz representation theorem, see \cite[Theorem 5.7, p.75]{conway2019course}. The Dirac distribution (of order zero) at $t \in \mathbb{R}$ is denoted $\delta_t$.

Following \cite{YamamotoRealization,yamamoto1989reachability}, we extend the truncation operator $\pi$ for distributions. Let $\mathcal{D}(\mathbb{R}_+)$ be the space of infinitely differentiable functions on $\mathbb{R}$ with compact support contained in $\mathbb{R}_+$. For $\alpha \in \mathcal{D}'(\mathbb{R})$, we define the operator $\pi$ as:
\begin{equation}
\label{eq:pi}
\scalprod{\pi \alpha, \psi} = \scalprod{\alpha, \psi},\qquad \psi \in \mathcal{D}(\mathbb{R}_+).
\end{equation}
We have that $\pi \alpha$ is a well-defined element of $\mathcal{D}'(\mathbb{R}_+)$ the dual space of $\mathcal{D}(\mathbb{R}_+)$. In particular if $\pi(\alpha)=0$ then the support of $\alpha$ is included in $\mathbb{R}_-$.

 With a slight abuse of language, we keep the former notations introduced when dealing with vectors or matrices whose entries belong to the spaces formerly introduced.

\section{Pseudo-rational systems of order zero}
\label{sec:pseudo-rational3}

In this section, we recall basic facts about pseudo-rational systems of order zero, a concept first introduced by Y. Yamamoto \cite{YamamotoRealization}. The only minor difference in our exposition is the consideration of $L^q$ spaces, for all $q\in [1,+\infty)$ instead of $q=2$, but the theory developed in \cite{YamamotoRealization} remains valid in this framework. In the sequel of this paper, unless otherwise stated, $q$ always denotes a real number belonging to $[1,+\infty)$.

Let $A$ be a Radon measure with compact support in $\mathbb{R}_-$. Let us consider a linear (zero-initial state) input-output system, denoted $\Sigma$, defined by 
the impulse response matrix\footnote{In linear systems theory, as discussed in \cite{kailath1980linear}, the impulse response matrix is defined as the system's output when the input is the Dirac delta distribution at time zero. In other words, if $u=\delta_0$, the system's output is the matrix $A$.} $A$ such that the input and the output of $\Sigma$ are linked as follows:
\begin{equation}
\label{eq:premiere}
    y=S_q(u), \quad S_q(u)=\pi(A*u),
\end{equation}
where $u$ and $y$ belong to the input state space $(\Omega_q)^m$ and the output state space $(\Gamma_q)^d$, with $\Gamma_q:=L^q_{\rm loc} (\mathbb{R}_+,\mathbb{R})$. By \cite[Section 6.1]{schwartz1966theorie}, it is straightforward to see that $S_q:=\pi \left(A*u\right)$ is a well-defined continuous linear operator from $(\Omega_q)^m$ to $(\Gamma_q)^d$, where the topologies on $(\Omega_q)^m$ and $(\Gamma_q)^d$ are the product topologies,  ensuring that $\Sigma$ is well-defined. In this paper, we consider $\Sigma$ systems with particular impulse response matrices.

\begin{definition}
\label{def_pseudo_rational}
   A $\Sigma^{Q}$ system is said to be \textit{pseudo-rational of order zero} if
    there are two  $d\times d$ and $d \times m$ matrices with entries in $M(\mathbb{R}_-)$, denoted $Q$ and $P$ respectively, such that
     \begin{enumerate}
        
\item \label{Item_def_pseudo_rational1}$Q$ has an inverse over $M_+(\mathbb{R})$ in a convolution sense, \textit{e.g.}, there exists a $d \times d$ matrix with entries in $M_+(\mathbb{R})$ such that $Q^{-1}*Q=Q*Q^{-1}=\delta_0 I_d$;
\item \label{Item_def_pseudo_rational2} $A:=Q^{-1}*P$.
        \end{enumerate}
\end{definition}

\begin{remark}
The term \textit{pseudo-rational system of order zero} stems from the fact that \( Q \) and \( P \) are Radon measures, which are distributions of order zero. Typical examples of pseudo-rational systems of order zero include difference delay systems, as seen in Equation~\eqref{eq:diff_delay_without}.
\end{remark}

We introduce the state space of pseudo-rational systems of order zero as $X^{Q,q}$, the subspace of $(\Gamma_q)^d$, defined as follows:
\begin{equation}
    X^{Q,q}:= \left\{y \in (\Gamma_q)^d \suchthat \pi(Q*y)=0  \right\}.
\end{equation}
 For $T > -l(\det(Q))$, we endow $ \Gamma_{q}$ with the norm $\|\cdot\|_{L^q([0,T],\mathbb{R})}$. Denoting by $\|\cdot\|_{X^{Q,q}_T}$ the product norm on $(\Gamma_q)^m$, we have that $X^{Q,q}$ is a Banach space when endowed with this norm, see \cite[Proposition 3.5]{YamamotoRealization}. From \cite[Theorem 2.9]{YamamotoRealization}, we have that $\overline{\mathrm{Im} S_q} \subset X^{Q,q}$, where $\overline{\mathrm{Im} S_q} $ is the closure of the image of the operator $S_q$. 
 
 We define the quasi-reachability and reachability\footnote{The quasi-reachability and the reachability are referred to as approximate controllability and controllability in a control theory formulation, see \cite{yamamoto1989reachability}.} properties within this formalism for a $\Sigma^Q$ system. Since the quasi-reachability depends on the convergence of sequences, we endow the space $(L^q_{\rm loc}(\mathbb{R}_+,\mathbb{R}))^m$ with its product topology, where the topology of $L^q_{\rm loc}(\mathbb{R}_+,\mathbb{R})$ is introduced in Section~\ref{sec:not2}.

\begin{definition}
\label{def_reachability}
    A $\Sigma^Q$ pseudo-rational system of order zero is:
    \begin{enumerate}[i)]
        \item 
    \textit{$X^{Q,q}$ quasi-reachable} if for all $y \in  X^{Q,q}$ there exist a sequence $(u_n)_{n \in \mathbb{N}} \in (\Omega_q)^m$ such that 
    \begin{equation}
        \pi(Q^{-1}*P*u_n) \longrightarrow y,\quad\mbox{in $(L^q_{\rm loc}(\mathbb{R}_+,\mathbb{R}))^m$ as $n \rightarrow +\infty.$}
    \end{equation}
   \item \textit{$X^{Q,q}$ quasi-reachable in finite time $T>0$} if for all $y \in  X^{Q,q}$ there exist a sequence $(u_n)_{n \in \mathbb{N}} \in (\Omega_q)^m$ with support in $[-T,0]$ such that 
    \begin{equation}
        \pi(Q^{-1}*P*u_n) \longrightarrow y,\quad\mbox{in $(L^q_{\rm loc}(\mathbb{R}_+,\mathbb{R}))^m$ as $n \rightarrow +\infty.$}
   \end{equation}
\item 
    \textit{$X^{Q,q}$ reachable} if for all $y \in  X^{Q,q}$ there exist a $u\in (\Omega_q)^m$ such that 
    \begin{equation}
        \pi(Q^{-1}*P*u) =y.
    \end{equation}
\item 
    \textit{$X^{Q,q}$ reachable in finite time $T>0$} if for all $y \in  X^{Q,q}$ there exist a $u\in \Omega_q^m$ with support in $[-T,0]$ such that 
    \begin{equation}
        \pi(Q^{-1}*P*u) =y.
    \end{equation}
        \end{enumerate}
\end{definition}

One fundamental challenge in establishing finite-time reachability is to provide an upper bound on the time required for it to occur.

\begin{definition} We define $T_{ \rm min,q}^{\rm qr}$ and $T_{ \rm min,q}^{\rm r}$ as the \textit{minimal time} of the $X^{Q,q}$ quasi-reachability and reachability for pseudo-rational system of order zero $\Sigma^Q$ as follows:
\begin{align*}
T_{ \rm min,q}^{\rm qr}&:=\inf_{T >0} \left\{ \mbox{$\Sigma^Q$ is $X^{Q,q}$ quasi-reachable in time $T$}\right\}.\\
T_{ \rm min,q}^{\rm r}&:=\inf_{T>0} \left\{ \mbox{$\Sigma^Q$ is $X^{Q,q}$ reachable in time $T$}\right\}.
\end{align*}
\end{definition}

The remainder of this paper explores the conditions for quasi-reachability and reachability, as well as the minimal time required for reachability in $\Sigma^Q$ systems.

\section{Minimal time of reachability}
\label{sec_min_time}

This section is dedicated to the equivalence between the reachability notions introduced in Definition~\ref{def_reachability} and to providing an upper bound on the minimal time of reachability. 
Since we will extensively use $l$  in the sequel of this section, we recall that $l$  defines the infimum of the support of a distribution and of a function, as stated in Equation~\eqref{def_alpha}. We present the following fundamental result.

\begin{theorem} 
\label{bound_minimal_time_controllability}
A $\Sigma^Q$ pseudo-rational system of order zero is $X^{Q,q}$ quasi-reachable (resp. reachable) if and only if it is  $X^{Q,q}$ quasi-reachable (resp. reachable) in time $\widetilde{T}$ for all $\tilde{T}>-l(\det(Q))$. In particular, an upper bound on the minimal time of quasi-reachability (resp. reachability) is given by $-l(\det(Q))$.
\end{theorem}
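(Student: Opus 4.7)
The implication ``(quasi-)reachable in time $\tilde T$ implies (quasi-)reachable'' is immediate from Definition~\ref{def_reachability}, since any input supported in $[-\tilde T,0]$ belongs to $(\Omega_q)^m$. The substance is the converse, so the plan is to prove: if the system is (quasi-)reachable and $\tilde T > T^* := -l(\det(Q))$, then it is (quasi-)reachable in time $\tilde T$.

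Fix a target $y \in X^{Q,q}$ and, from the hypothesis, an input $v \in (\Omega_q)^m$ (or a sequence $v_n$) with $\pi(Q^{-1} * P * v) = y$ (resp.\ $\pi(Q^{-1}*P*v_n) \to y$). Each such input has compact support in $\mathbb{R}_-$, say in $[-T,0]$, but with $T$ possibly very large. The first step is to reduce matters to a \emph{replacement lemma}: for every $v \in (\Omega_q)^m$ there exists $u \in (\Omega_q)^m$ with $\mathrm{supp}(u) \subset [-\tilde T, 0]$ and $\pi(Q^{-1}*P*u) = \pi(Q^{-1}*P*v)$. Once this is established, it applies to each $v_n$ and yields the desired finite-time (quasi-)reachability; the quasi-reachable version uses continuity of $u \mapsto \pi(Q^{-1}*P*u)$ in the Fréchet topology on $(L^q_{\mathrm{loc}}(\mathbb{R}_+,\mathbb{R}))^d$.

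The key structural input for the replacement lemma is the Cramer decomposition $Q^{-1} = \det(Q)^{-1} * \mathrm{adj}(Q)$ combined with Titchmarsh's convolution theorem, which yields $l(\det(Q)^{-1}) = -l(\det(Q)) = T^*$. For any $y \in X^{Q,q}$, the condition $\pi(Q*y) = 0$ forces $Q * y_{\mathrm{ext}}$ to be supported in $[l(Q), 0]$, so that
\[
\eta' := \det(Q) * y_{\mathrm{ext}} = \mathrm{adj}(Q) * Q * y_{\mathrm{ext}}
\]
has support in $[-T^*, 0]$. The condition $\pi(Q^{-1}*P*u) = y$ is then equivalent, after multiplying by $\mathrm{adj}(Q)$ and invoking $Q^{-1}*\eta' = y_{\mathrm{ext}}$ modulo $\mathrm{supp} \subset \mathbb{R}_-$, to the existence of $g$ with $\mathrm{supp}(g) \subset \mathbb{R}_-$ such that
\[
\mathrm{adj}(Q) * P * u \;=\; \eta' + \det(Q) * g.
\]
Here $\eta'$ is fixed and supported in $[-T^*, 0]$, while $g$ is a free parameter supported in $\mathbb{R}_-$.

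The construction of $u$ with $\mathrm{supp}(u) \subset [-\tilde T, 0]$ now exploits the strict inequality $\tilde T > T^*$. Informally, on the region $(-\infty, -T^*)$ the distribution $\eta'$ vanishes, so the equation can be satisfied there by choosing $g$ appropriately via the convolution algebra of $\det(Q)$, while on the remaining window $[-T^*, 0] \subset [-\tilde T, 0]$ there is sufficient room in the support of $u$ to match $\eta'$ up to the $\det(Q) * g$ correction. The existence of \emph{some} representation (with $v$ in place of $u$) is guaranteed by the reachability hypothesis; the task is to modify it into one where $u$ has support in $[-\tilde T, 0]$, absorbing the ``tail'' of $\mathrm{adj}(Q)*P*v$ lying outside $[-T^*,0]$ into the $\det(Q)*\mathcal{E}'(\mathbb{R}_-)$ term.

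The main obstacle is making this last construction rigorous in the matrix case. The scalar case was handled in \cite{YAMAMOTO2021353}, but here one must carefully use the structure of $\mathrm{adj}(Q)$ and $\det(Q)$ as matrix- and scalar-valued Radon measures with compact support in $\mathbb{R}_-$, and exploit the support-bounded-on-the-left nature of inverses in $M_+(\mathbb{R})$. For the quasi-reachable case the exact solvability is replaced by a density argument using continuity of convolution and completeness of the relevant function spaces.
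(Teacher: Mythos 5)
Your reduction of the theorem to a \emph{replacement lemma} (for every admissible input $v$ there is an input $u$ supported in $[-\widetilde{T},0]$ with $\pi(Q^{-1}*P*u)=\pi(Q^{-1}*P*v)$) matches the structure of the paper's proof, and your identification of the Cramer decomposition $Q^{-1}=\det(Q)^{-1}*\mathrm{Adj}(Q)$ and of $-l(\det(Q))$ as the relevant quantities is correct. But the replacement lemma itself --- the only nontrivial content of the theorem --- is not proved. The route you sketch works on the \emph{output} side: you rewrite the reachability condition as $\mathrm{Adj}(Q)*P*u=\eta'+\det(Q)*g$ and propose to ``absorb the tail'' of $\mathrm{Adj}(Q)*P*v$ into the $\det(Q)*g$ term. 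Even granting that absorption, this only modifies the right-hand side of the equation; it does not produce a new input $u$, because $\mathrm{Adj}(Q)*P$ is a $d\times m$ convolution operator that is in general not invertible (not even square when $m\neq d$), so a modified right-hand side cannot be pulled back to an input. You acknowledge this yourself (``the main obstacle is making this last construction rigorous''), which means the argument is incomplete exactly where the work lies.

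The paper closes this gap by decomposing the \emph{input} instead, via an adaptation of Kamen's lemma to the scalar measure $\beta=\det(Q)$: for any $\tau<l(\beta)$, every $\omega\in\Omega_q$ can be written $\omega=\alpha+\det(Q)*r$ with $\alpha,r\in\Omega_q$ and $l(\alpha)>\tau$. The construction is explicit: choose a continuous cutoff $\phi$ equal to $1$ on an interval $[a_1,b_1]$ with $a_1<l(\det(Q)^{-1}*\omega)$ and $\tau-l(\det(Q))<b_1<0$, vanishing outside a slightly larger interval contained in $(-\infty,0)$, and set $r:=(\det(Q)^{-1}*\omega)\phi$, $\alpha:=\omega-\det(Q)*r$. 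Applying this componentwise to the given input $\omega$ with $\tau=-\widetilde{T}$ yields
\[
\pi(Q^{-1}*P*\omega)=\pi(Q^{-1}*P*\alpha)+\pi(\mathrm{Adj}(Q)*P*r),
\]
and the second term vanishes because $\mathrm{Adj}(Q)*P*r$ is supported in $\mathbb{R}_-$; hence $\alpha$, supported in $[-\widetilde{T},0]$, reaches the same target. This sidesteps any inversion of $\mathrm{Adj}(Q)*P$, and the same substitution applied to each $u_n$ handles the quasi-reachable case by continuity. To repair your write-up, replace the output-side absorption argument by this input-side decomposition.
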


\begin{remark}
\label{remark_further_details}
     The result of Theorem~\ref{bound_minimal_time_controllability} was initially established for the scalar case ($m = d = 1$) and later extended to the matrix case for the algebra of distributions with compact support in $\mathbb{R}_-$; see \cite{Yamamoto_coprimness_measure} and \cite{YAMAMOTO_Multi_Ring_2011}. The proof involves analyzing the dual of a pseudo-rational system and demonstrating its observability within a finite time, a result that was first obtained in \cite{yamamoto1984note} with an explicit bound. A proof for the scalar case with Radon measures was subsequently provided in \cite{YAMAMOTO2021353}.
\end{remark}

\begin{remark}
    Kamen studied the minimal time of reachability for general input-output systems \cite{kamen1976}, where \( A \) given in \eqref{eq:premiere} is a distribution (with support bounded on the left), and the input and output spaces are also distributions. He provided an upper bound on the minimal time of reachability for these systems, but did not address the issue of pseudo-rational systems when the input and output spaces are functional.
\end{remark}

 Before the proof of Theorem~\ref{bound_minimal_time_controllability}, we begin with an adaptation of the Kamen lemma \cite[Lemma 6.1]{kamen1976} to align with the framework of pseudo-rational systems of order zero.

\begin{lemma}
\label{lemma_Kamen}
Let $\beta \in M(\mathbb{R}_-)$ having an inverse denoted $\beta^{-1}$ in $M_+(\mathbb{R})$ and $ \tau < l(\beta)$. For all $\omega \in \Omega_q$, there exist $\alpha,r \in \Omega_q$ such that $\omega=\alpha+\beta*r$ and  $l(\alpha)>\tau$.
\end{lemma}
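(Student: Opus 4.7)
My plan is to formally invert $\beta$ against $\omega$, then truncate the resulting function on the left so that the remainder is supported close to zero. Set $r_0 := \beta^{-1} * \omega$. Since $\beta^{-1} \in M_+(\mathbb{R})$ is a Radon measure whose support is bounded on the left and $\omega \in \Omega_q$ is an $L^q$-function with compact support contained in $\mathbb{R}_-$, a version of Young's inequality for Radon measures should show that $r_0$ is a locally $L^q$-function on $\mathbb{R}$ whose support is bounded on the left. Moreover, by associativity of convolution (justified because all three factors have supports bounded on the left), one has $\beta * r_0 = (\beta * \beta^{-1}) * \omega = \omega$.

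Next, I would choose any $\tau' \in (\tau - l(\beta),\, 0]$; this interval is nonempty since the hypothesis $\tau < l(\beta) \le 0$ gives $\tau - l(\beta) < 0$. I would then define
\[
r := r_0 \cdot \mathbf{1}_{(-\infty, \tau']}, \qquad \alpha := \omega - \beta * r.
\]
The function $r$ has support contained in the compact set $[l(r_0), \tau'] \subset \mathbb{R}_-$ and inherits local $L^q$-integrability from $r_0$, so $r \in \Omega_q$. Then $\beta * r$ belongs to $\Omega_q$ by a direct application of Young's inequality (together with the support inclusion $\mathrm{supp}(\beta)+\mathrm{supp}(r)\subset\mathbb{R}_-$), and consequently $\alpha \in \Omega_q$ as well. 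The decomposition $\omega = \alpha + \beta * r$ holds by construction.

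It remains to estimate $l(\alpha)$. Using $\beta * r_0 = \omega$, one rewrites
\[
\alpha = \omega - \beta * r = \beta * (r_0 - r) = \beta * \bigl(r_0 \cdot \mathbf{1}_{(\tau', \infty)}\bigr),
\]
whose support is contained in $\mathrm{supp}(\beta) + [\tau', \infty) \subset [l(\beta) + \tau', \infty)$. By the choice of $\tau'$, $l(\beta) + \tau' > l(\beta) + (\tau - l(\beta)) = \tau$, so $l(\alpha) \ge l(\beta) + \tau' > \tau$, as desired. The main technical point I expect to have to justify carefully is that $r_0$ is an $L^q_{\mathrm{loc}}$-function (rather than merely a distribution) and that the associativity $\beta * \beta^{-1} * \omega = \omega$ holds in the sense of functions; both follow from the support conditions combined with Young's inequality for convolution with a Radon measure, but they are the only non-algebraic ingredients in the argument.
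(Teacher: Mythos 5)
Your proof is correct and follows essentially the same route as the paper's: both decompose $\omega=\beta*(\beta^{-1}*\omega)$ and take $r$ to be a truncation of $\beta^{-1}*\omega$ supported up to a level strictly above $\tau-l(\beta)$, so that $\alpha=\beta*\bigl((\beta^{-1}*\omega)-r\bigr)$ has support to the right of $\tau$. The only cosmetic difference is that you cut off with the sharp indicator $\mathbf{1}_{(-\infty,\tau']}$ while the paper uses a continuous Tietze cutoff $\phi$; since $\beta^{-1}*\omega$ is an $L^q_{\rm loc}$ function, the indicator suffices and even spares the case distinction $l(\omega)>\tau$ that the paper treats separately.
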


\begin{proof}
 Pick $\omega \in \Omega_q$. If $l(\omega)>\tau$, the result follows immediately. Thus, we assume that $l(\omega) \le \tau$. Using the equality $\beta *\left( \beta^{-1}*\omega \right)= \omega$, we deduce that $l(\omega) \ge l(\beta)+l(\beta^{-1}*\omega)$, which implies
\begin{equation}
    l(\beta^{-1}*\omega) \le \tau -l(\beta)<0,
\end{equation}
thanks to the inequalities $l(\omega) \le \tau$ and $\tau <l(\beta)$, as well as the continuity of the infimum.  Let $a_1,a_2,b_1,b_2 \in \mathbb{R}$ such that $-\infty<a_2<a_1<l(\beta^{-1}*\omega)$ and $\tau-l(\beta)<b_1<b_2<0$. By the Tietze extension theorem \cite[Theorem 20.4]{Rudin1987Real}, there exists a continuous function on $\mathbb{R}$ with compact support, denoted $\phi$, such that $\phi$ is equal to $1$ on $[a_1,b_1]$, $0$ on the complementary of $[a_2,b_2]$ in $\mathbb{R}$ and $0 \le \phi(t) \le 1$ for all $t\in \mathbb{R}$. Define $r:=(\beta^{-1}*\omega)\phi$ the pointwise product of $\beta^{-1}*\omega$ and $\phi$. Since the support of \( \phi \) is contained in \( [a_2, b_2] \) by definition, the support of \( r \) is also contained in \( [a_2, b_2] \), as \( \phi \) vanishes outside this interval. Furthermore, by construction, \( a_2 < l(\beta^{-1} * \omega) \), so \( r \) is zero on \( [a_2, l(\beta^{-1} * \omega)) \) because \( \beta^{-1} * \omega \) vanishes on the same interval. As \( l(\beta^{-1} * \omega) < b_2 \), it follows that the support of $r$ is contained in $[l(\beta^{-1}*\omega),b_2]$. Thus, $r \in \Omega_q$.
Next, define:
\begin{align}
\begin{aligned}
\alpha:&=-\beta * \left( (\beta^{-1}*\omega)\phi -\beta^{-1}*\omega \right),\\
&=\omega-\beta *r.
\end{aligned}
\end{align}
It remains to show that $l(\alpha)>\tau$. Since $(\beta^{-1}*\omega)\phi=\beta^{-1}*\omega$ on $(-\infty,b_1)$, we deduce that the support of $-\left(\beta^{-1}*\omega \right)\phi+\beta^{-1}*\omega$ is contained in $[b_1,+\infty)$. Then by definition of $\alpha$, the support of $\alpha$ belongs to $[b_1+l(\beta),0]$. Hence $l(\alpha)\ge b_1+l(\beta)$, but by definition of $b_1$, we have $\tau<b_1+l(\beta)$, and then $l(\alpha)>\tau$.
\end{proof}

\begin{proof}[Proof Theorem~\ref{bound_minimal_time_controllability}]
It is obvious that the quasi-reachability (resp. reachability) in finite time $T>0$ implies the quasi-reachability (resp. reachability). 

Let us show the converse. We will prove that the $X^{Q,q}$ reachability implies the $X^{Q,q}$ reachability in time $T>-l(\det(Q))$ for all $T$. Let $T>-l(\det(Q))$ and we assume that a $\Sigma^Q$ pseudo-rational system of order zero is $X^{Q,q}$ reachable. By assumption, for every $\psi \in X^{Q,q}$, we can find an input $\omega \in (\Omega_q)^m$ such that
\begin{equation}
\label{eq_th_bound}
    \pi(Q^{-1}*P*\omega)=\psi.
\end{equation}

Applying Lemma~\ref{lemma_Kamen} with $\beta= \det(Q)$ on each components on $\omega$, we get the existence of $\alpha,r \in (\Omega_q)^m$ such that $\omega=\alpha+\det(Q)*r$ and $l(\alpha)>-T$. We deduce that
\begin{equation}
\label{eq_th_bound2}
    \pi(Q^{-1}*P*\omega)=\pi(Q^{-1}*P*\alpha)+\pi(\mathrm{Adj(Q)}*P*r),
\end{equation}
where $\rm{Adj}(Q)$ is the adjugate matrix of $Q$ and it is equal to $\det(Q)*Q^{-1}$ by a well known result on matrices. Thus, since the support of the function $\mathrm{Adj(Q)}*P*r$ is included in $\mathbb{R}_-$, we get that $\pi(\mathrm{Adj(Q)}*P*r)$ is zero (as an element in $\left(L^q_{\rm loc}(\mathbb{R}_+,\mathbb{R}\right)^d$). We deduce from Equations~\eqref{eq_th_bound}-\eqref{eq_th_bound2} that
\begin{align}
\begin{aligned}
    \pi(Q^{-1}*P*\alpha)&=\pi(Q^{-1}*P*\omega),\\
  &=\psi.
    \end{aligned}
\end{align}
In other words, for every target $\psi$, we have an effective procedure to construct a control $\alpha$ with support in $[-T,0]$ allowing to reach $\psi$. Thus $\Sigma^Q$ is $X^{Q,q}$ reachable in time $T$ for all $T>-l(\det(Q))$.

We only sketch the proof of the $X^{Q,q}$ quasi-reachability implying the $X^{Q,q}$ quasi-reachability in time $T>-l(\det(Q))$ for all $T$. Let $T>-l(\det(Q))$, by assumption, for every $y \in  X^{Q,q}$ there exist a sequence $(u_n)_{n \in \mathbb{N}} \in (\Omega_q)^m$ such that 
    \begin{equation}
    \label{equation_supp_proof1}
        \pi(Q^{-1}*P*u_n) \longrightarrow y,\quad\mbox{in $(L^q_{\rm loc}(\mathbb{R}_+,\mathbb{R}))^m$ as $n \rightarrow +\infty.$}
    \end{equation}

Applying the same strategy of proof as for the reachability, we can find a sequence $(\alpha_n)_{n \in \mathbb{N}} \in (\Omega_q)^m$ with support in $[-T,0]$ such that 
    \begin{equation}
    \label{equation_supp_proof2}
        \pi(Q^{-1}*P*u_n) = \pi(Q^{-1}*P* \alpha_n).
   \end{equation}

   From Equations~\eqref{equation_supp_proof1}-\eqref{equation_supp_proof2}, we get 
    \begin{equation}
    \label{equation_supp_proof3}
        \pi(Q^{-1}*P*\alpha_n) \longrightarrow y,\quad\mbox{in $(L^q_{\rm loc}(\mathbb{R}_+,\mathbb{R}))^m$ as $n \rightarrow +\infty.$}
    \end{equation}
Thus the system $\Sigma^Q$ is $X^{Q,q}$ quasi-reachable in time $T$ for all $T>-l(\det(Q))$.

\end{proof}

\begin{remark}
\label{remark_build1}
Following the proofs of Lemma~\ref{lemma_Kamen} and Theorem~\ref{bound_minimal_time_controllability}, we are able to build a control in finite time $T$ for all $T>-l(\det(Q))$ from a given input $u$ allowing us to reach the same output as $u$.
\end{remark}

\section{Left-coprimeness reachability conditions}
\label{sec4}
The $X^{Q,q}$ quasi-reachability can be fully characterized by an approximate left-coprimeness condition, as proven in \cite[Theorem 4.4]{YamamotoRealization}. However, a similar characterization for the $X^{Q,q}$ reachability (in finite time) of pseudo-rational systems is not available in the literature. In this section, we investigate the relationship between the $X^{Q,q}$ reachability and left-coprimeness condition. We begin by defining the notion of left-coprimeness.

\begin{definition}
\label{def_coprim}
   A $\Sigma^Q$ pseudo-rational system of order zero is \textit{left-coprime} if there exist matrices $R$ and $S$ of appropriate size with entries in $M(\mathbb{R}_-)$ such that   \begin{equation}
    \label{eq:app_cont0bisbis}
        Q*R+P*S
        =
        I_d \delta_0.
    \end{equation}
\end{definition}

We are now ready to show that the left-coprimeness condition implies the $X^{Q,q}$ reachability in finite time. 

\begin{proposition}
\label{prop_suffi}
    If  a $\Sigma^Q$ pseudo-rational system of order zero is left-coprime, then it is $X^{Q,q}$ reachable in time $T>-l(\det(Q))$.
\end{proposition}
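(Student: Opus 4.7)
The idea is to build, for each target $y\in X^{Q,q}$, an explicit input from the Bézout identity, and then to shrink its support via Theorem~\ref{bound_minimal_time_controllability}.

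Fix $y\in X^{Q,q}$ and view it as a function on all of $\mathbb{R}$ by extending it by zero on $\mathbb{R}_-$. Introduce
$$w := Q*y.$$
Since $Q$ has entries in $M(\mathbb{R}_-)$ and $y$ vanishes on $\mathbb{R}_-$, the support of $w$ lies in $[l(Q),+\infty)$; the condition $y\in X^{Q,q}$ says exactly that $\pi(Q*y)=0$, so in fact the support of $w$ is contained in the compact set $[l(Q),0]\subset\mathbb{R}_-$. Standard Young-type estimates for the convolution of a compactly supported Radon measure with an $L^q_{\rm loc}$ function then give $w\in (\Omega_q)^d$.

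Next, set $u:=S*w$. Because $S$ has entries in $M(\mathbb{R}_-)$ and $w\in (\Omega_q)^d$, the convolution $u$ belongs to $(\Omega_q)^m$. Using the left-coprimeness identity $P*S=I_d\delta_0-Q*R$ together with $Q^{-1}*Q=\delta_0 I_d$, a direct computation yields
$$Q^{-1}*P*u \;=\; Q^{-1}*(I_d\delta_0 - Q*R)*w \;=\; Q^{-1}*w - R*w \;=\; y - R*w,$$
where the last equality uses $Q^{-1}*w=Q^{-1}*Q*y=y$. Since $R$ has entries in $M(\mathbb{R}_-)$ and $w$ is supported in $\mathbb{R}_-$, the product $R*w$ is supported in $\mathbb{R}_-$ as well, so $\pi(R*w)=0$, and therefore $\pi(Q^{-1}*P*u)=y$. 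This shows that $\Sigma^Q$ is $X^{Q,q}$ reachable in the sense of item iii) of Definition~\ref{def_reachability}.

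Finally, Theorem~\ref{bound_minimal_time_controllability} automatically upgrades this unconstrained reachability to $X^{Q,q}$ reachability in time $T$ for every $T>-l(\det(Q))$, which completes the proof. The only mildly delicate verification is that $w=Q*y$ genuinely lies in $(\Omega_q)^d$ rather than being a mere distribution; this is routine because $Q$ is a matrix of compactly supported Radon measures and $y\in(\Gamma_q)^d$, so the convolution is a locally $L^q$ function supported in the compact set $[l(Q),0]$. Every other step amounts to algebraic manipulation inside the convolution algebra generated by distributions with support bounded on the left, where all convolutions involved are unambiguously defined.
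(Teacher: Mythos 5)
Your argument is correct and essentially identical to the paper's proof: both construct the input $u=S*Q*y$ from the Bézout identity, use $\pi(R*Q*y)=0$ (which holds since $y\in X^{Q,q}$) to cancel the remainder term, and invoke Theorem~\ref{bound_minimal_time_controllability} to pass from plain reachability to reachability in any time $T>-l(\det(Q))$. The only (cosmetic) difference is that you spell out more explicitly why $w=Q*y$ is a compactly supported $L^q$ function on $\mathbb{R}_-$, a point the paper treats more briskly.
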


\begin{proof}
 In view of Theorem~\ref{bound_minimal_time_controllability}, it remains to prove that a left-coprime $\Sigma^Q$ pseudo-rational system of order zero is $X^{Q,q}$ reachable.
Consider a target output $ \psi \in X^{Q,q}$ and let us define the input given by
\begin{equation}\label{eq:motion-planning}
\omega := S * Q * \psi.
\end{equation}
Since the supports of the elements of \( S*Q \) and \( \psi \) are contained in \( \mathbb{R}_- \) and \( \mathbb{R}_+ \), respectively, it follows that the elements of $\omega$ belong to $M_+(\mathbb{R})$. Moreover, since \( \pi(\omega) = 0 \) due to \( \psi \in X^{Q,q} \), we conclude that the elements of \( \omega \) lie in \( \Omega_q \), and thus \( \omega \in (\Omega_q)^m \). From \eqref{eq:app_cont0bisbis}--\eqref{eq:motion-planning}, we have

\begin{align}
\label{eq_prop3_1}
\begin{aligned}
\pi\left(Q^{-1}*P* \omega\right) &=\pi\left(Q^{-1}*P* S*Q*\psi\right) ,\\
&=\pi\left(Q^{-1}*\left(\delta_0 I_d-Q*R \right)*Q*\psi\right),\\
&=\pi(\psi)-\pi(R*Q*\psi).
\end{aligned}
\end{align} 
Since \( \psi \in X^{Q,q} \), it follows that \( \pi(R*Q*\psi) = 0 \). We deduce from Equation~\eqref{eq_prop3_1} that

\begin{equation}
    \pi\left(Q^{-1}*P* \omega\right)=\pi (\psi)=\psi.
\end{equation}
Thus we reach the target ouput $\psi$ with the input $\omega$, achieving the proof of the proposition.

\end{proof}

\begin{remark}
\label{remark_build2}
Proposition~\ref{prop_suffi} is central as it provides a method for constructing inputs to reach desired targets. In other words, if the system $\Sigma^Q$ is left-coprime and the Radon-valued matrix $S$ is known, we can construct an input to reach the desired target by following the proof of Proposition~\ref{prop_suffi}. Moreover, taking into account Remark~\ref{remark_build1}, it is possible to construct inputs that allows reaching any target in $X^{Q,q}$ within a time $T > -l(\det(Q))$.
\end{remark}

Proposition~\ref{prop_suffi} shows that the left-coprimeness condition is sufficient for the $X^{Q,q}$ reachability of pseudo-rational system of order zero. We will now prove that it is also a necessary condition when $q$ is equal to one.

\begin{theorem}
\label{th_coprm_L1}
A $\Sigma^Q$ pseudo-rational system of order zero is $X^{Q,1}$ reachable in time $T>-l(\det(Q))$ if and only if it is left-coprime.
\end{theorem}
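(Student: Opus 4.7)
The implication ``left-coprime $\Rightarrow$ $X^{Q,1}$-reachable in time $T>-l(\det(Q))$'' is exactly the content of Proposition~\ref{prop_suffi}, so the only work is in the converse. The strategy is to reach a carefully chosen family of $X^{Q,1}$-targets whose ``Dirac-type'' weak-$*$ limit encodes the Bézout identity, and to extract a weak-$*$ cluster point of the corresponding controls in the space of Radon measures.

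First I would produce the approximating targets. For each basis vector $e_i\in\mathbb{R}^d$ and each $\varepsilon>0$, set $\eta_\varepsilon:=\varepsilon^{-1}\mathbb{1}_{[-\varepsilon,0]}$ and
\[
\psi_{\varepsilon,i}:=\pi(Q^{-1}*\eta_\varepsilon e_i).
\]
Because $\eta_\varepsilon$ has support in $\mathbb{R}_-$ and unit $L^1$-norm, the computation $\pi(Q*\psi_{\varepsilon,i})=\pi(\eta_\varepsilon e_i)=0$ shows $\psi_{\varepsilon,i}\in X^{Q,1}$, and a direct estimate gives $\|\psi_{\varepsilon,i}\|_{L^1([0,T''])}\le \|Q^{-1}\|_{TV([0,T''+\varepsilon])}$, a bound uniform in small $\varepsilon$ for each fixed $T''$. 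By $X^{Q,1}$-reachability in time $T$, each $\psi_{\varepsilon,i}$ admits a control $u_{\varepsilon,i}\in (L^1([-T,0]))^m$ satisfying $\pi(Q^{-1}*P*u_{\varepsilon,i})=\psi_{\varepsilon,i}$. Applying the open mapping theorem to the continuous linear surjection $(L^1([-T,0]))^m\to X^{Q,1}|_{[0,T'']}$, $u\mapsto \pi(Q^{-1}*P*u)|_{[0,T'']}$, one may in addition arrange $\|u_{\varepsilon,i}\|_{L^1([-T,0])}\le C\|\psi_{\varepsilon,i}\|_{L^1([0,T''])}$ for a constant $C$ independent of $\varepsilon$.

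The family $\{u_{\varepsilon,i}\}_{\varepsilon>0}$ is then bounded in the Banach space $M([-T,0])^m$ of Radon measures, so Banach-Alaoglu yields a weak-$*$ cluster point $u_i^\ast\in M([-T,0])^m\subset M(\mathbb{R}_-)^m$. Since $\eta_\varepsilon$ converges weakly-$*$ to $\delta_0$ as $\varepsilon\to 0$, and since testing $\pi(Q^{-1}*P*u)$ against a smooth function supported on a fixed compact $[0,T'']$ sees only a bounded portion of $Q^{-1}$, convolution by $P$ and truncation by $\pi$ are weak-$*$ continuous on the relevant bounded subsets. Passing to the limit along a suitable subsequence gives
\[
\pi(Q^{-1}*P*u_i^\ast)=\pi(Q^{-1}e_i).
\]
Hence $r_i:=Q^{-1}e_i-Q^{-1}*P*u_i^\ast$ has support contained in $\mathbb{R}_-$; because $Q^{-1}e_i$ and $Q^{-1}*P*u_i^\ast$ are matrices of Radon measures that coincide on $\mathbb{R}_+$ and both have support bounded below by $-T+l(P)$, the difference $r_i$ is itself a vector of Radon measures with compact support in $\mathbb{R}_-$, i.e.\ $r_i\in M(\mathbb{R}_-)^d$.

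Convolving the definition of $r_i$ by $Q$ gives $Q*r_i+P*u_i^\ast=e_i\delta_0$. Stacking the columns $r_i$ and $u_i^\ast$ into matrices $R\in M(\mathbb{R}_-)^{d\times d}$ and $S\in M(\mathbb{R}_-)^{m\times d}$ yields the identity $Q*R+P*S=I_d\delta_0$ of Definition~\ref{def_coprim}, which is precisely left-coprimeness. The main obstacle is the passage to the weak-$*$ limit: one has to justify that weak-$*$ convergence of the measures $u_{\varepsilon,i}$ is preserved through the non-compactly-supported convolution kernel $Q^{-1}$, and that the limit object $r_i$ is a genuine Radon measure rather than a more singular distribution. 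Both points are handled by localizing in $\mathbb{R}_+$: on any fixed compact interval only a bounded portion of $Q^{-1}$ is seen, and the explicit expression of $r_i$ as a difference of two measures agreeing on $\mathbb{R}_+$ forces it to reduce to a compactly supported Radon measure on $\mathbb{R}_-$.
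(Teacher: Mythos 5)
Your proof is correct and follows essentially the same route as the paper's: reach approximations of the columns of $\pi(Q^{-1})$ inside $X^{Q,1}$, use the open mapping theorem between $(L^1([-T,0]))^m$ and $X^{Q,1}$ (both Banach, the map surjective by reachability in time $T$) to obtain controls with uniformly bounded $L^1$ norms, extract a weak-$*$ limit $S$ in the space of Radon measures supported in $[-T,0]$, and set $R:=Q^{-1}-Q^{-1}*P*S$, which lies in $M(\mathbb{R}_-)^{d\times d}$ because $\pi(R)=0$ and all terms have support bounded below; convolving by $Q$ then gives $Q*R+P*S=I_d\delta_0$. The only substantive difference is how the approximating targets are produced: the paper invokes Lemma~4.3 of Yamamoto's 1989 paper to obtain $\psi_n\in (X^{Q,2})^d$ converging distributionally to $\pi(Q^{-1})$ and deduces the uniform $\|\cdot\|_{X^{Q,1}_T}$ bound from $Q^{-1}$ being of order zero, whereas you mollify explicitly with $\psi_{\varepsilon,i}=\pi(Q^{-1}*\eta_\varepsilon e_i)$, which makes both the membership in $X^{Q,1}$ (via $\pi(Q*\psi_{\varepsilon,i})=\pi(\eta_\varepsilon e_i)=0$) and the uniform $L^1$ bound (via $\|\eta_\varepsilon\|_{L^1}=1$ and the local total variation of $Q^{-1}$) transparent. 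This substitution is harmless and arguably more self-contained; the remaining technical point you flag — passing convolution by the non-compactly supported kernel $Q^{-1}$ through the weak-$*$ limit — is handled by the same localization the paper implicitly relies on, so no gap results.
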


\begin{proof} From Proposition~\ref{prop_suffi}, if a $\Sigma^Q$ pseudo-rational system of order zero is left-coprime, it is $X^{Q,1}$ reachable in time $T>-l(\det(Q))$. Let us show the converse.

Let $T>-l(\det(Q))$. Applying \cite[Lemma 4.3]{yamamoto1989reachability}, we have the existence of a sequence 
$\psi_n=(\psi_{n,1},\dots,\psi_{n,d}) \in (X^{Q,2})^d \subset (X^{Q,1})^d$,  $n \in \mathbb{N}$, such that $ \psi_n \rightarrow \pi (Q^{-1})$ in the distributional sense as $n\to\infty$. In particular, since each column of $Q^{-1}$ is a distribution of order zero, we get the existence of a $C>0$  such that
\begin{equation}
\label{estimate_eq_thm}
    \|\psi_{n,j}(\cdot)\|_{X^{Q,1}_T} \le C,\qquad \forall j \in 1,...,d,\ \forall n \in \mathbb{N}.
\end{equation}

Let us consider $\overline{\Omega}_1$ the subspace of $\Omega_1$ composed of the inputs with support in $[-T,0]$ endowed with the norm $\|\cdot\|_{L^1([-T,0],\mathbb{R})}$. We consider from now on the product norm, denoted by $\|\cdot \|_{(\overline{\Omega}_1)^m}$, on the space $ (\overline{\Omega}_1)^m$. The map from $ (\overline{\Omega}_1)^m$ into $X^{Q,1}$, two Banach spaces with respect to the norms $\|\cdot\|_{(\overline{\Omega}_1)^m}$ and $\|\cdot\|_{X^{Q,1}_T}$, defined by  $u \mapsto \pi(Q^{-1}*P*u) $ is a continuous surjective map. Thus, using the open mapping theorem (\cite[Theorem 12.1, p. 90]{conway2019course}) together with the estimate given in Equation~\eqref{estimate_eq_thm} provide the existence of $M'>0$ and $S_{n,j} \in (\overline{\Omega}_1)^m$ such that
\begin{equation}
\label{estimate_eq_thm2}
    \pi(Q^{-1}*P*S_{n,j})=\psi_{n,j} \quad \mathrm{and} \quad \|S_{n,j}\|_{(\overline{\Omega}_1)^m} \le M',\quad \forall j \in 1,...,d, \quad \forall n \in \mathbb{N}.
\end{equation}

Denoting $S_n=(S_{n,1},\cdots,S_{n,d})$, the weak-compactness of Radon measures, see for instance \cite[Theorem 2 p.55]{evans1992studies}, and the estimate given in the inequality of \eqref{estimate_eq_thm2} imply the existence of a subsequence of $(S_n)_{n\in \mathbb{N}}$ converging toward a radon measure $S$ in a distributional sense, where the elements of $S$ have compact supports in $\mathbb{R}_-$. Then, taking the limit (with respect to the subsequence) in the first equation of \eqref{estimate_eq_thm2} leads to $\pi(Q^{-1}*P*S)=\pi(Q^{-1})$. Let us define $R:=Q^{-1}-Q^{-1}*P*S$. Since $\pi(R)=0$, it follows that the elements of $R$ belongs to $M(\mathbb{R}_-)$. Multiplying $R$ with respect to the convolution on the left by $Q$ allows us to conclude that $\Sigma^Q$ is left-coprime.

\end{proof}

\begin{remark}
  It is important to emphasize that the proof provided for the case \( q = 1 \) does not directly generalize to \( q > 1 \). Specifically, the convergence \( \psi_n \to \pi(Q^{-1}) \) in the distributional sense guarantees the boundedness of \( (\psi_{n,j})_{n \in \mathbb{N}} \) in $X^{Q,1}$ for $j \in \{1,\cdots,d\}$. However, this boundedness does not necessarily extend to $X^{Q,q}$ for \( q > 1 \).
\end{remark}

From Theorem~\ref{th_coprm_L1}, the $X^{Q,1}$ reachability is equivalent to the left-coprimeness of $\Sigma^{Q}$. In complex analysis, the left-coprimeness condition is referred to as a corona problem, with the most celebrated result being Carleson's theorem for holomorphic functions of a single variable that are uniformly bounded in the unit disk, see \cite{carleson1962interpolations}. In the case of matrices with entries in $M(\mathbb{R}_-)$, the problem appears to be complicated, and there is no trivial solution available in the literature. In the current state of knowledge, we are able to provide sufficient criteria for the exact controllability for outputs with sufficient regularity. For two integers $k,n$  and $\mathbb{K}=\mathbb{R}$ or $\mathbb{R}_+$, we denote by $C^k(\mathbb{K},\mathbb{R}^n)$ the space of $k$-times continuously differentiable functions defined on $\mathbb{K}$ with values in $\mathbb{R}^n$. We define a more regular state space for pseudo-rational systems of order zero, i.e.,
for an integer $k$, we let
\begin{equation}
\label{def_state_space}
X_{k}^Q:=\left\{\,y \in  C^k(\mathbb{R}_+,\mathbb{R}^d)\,|\,\pi(Q*\tilde{y})=0,\,\pi(\tilde{y})=y,\, \tilde{y} \in C^k(\mathbb{R},\mathbb{R}^d)  \right\}.
\end{equation}

By a careful inspection, we have that $X_{k}^Q$ is well defined because $\pi(Q*\tilde{y})$ does not depend on the choice of $\tilde{y}$.
To agree with the state space $X^Q_k$, we need to define the reachability for this function space.
\begin{definition}
\label{def:reach_regular}
A pseudo-rational system of order zero $\Sigma^Q$ is $C^k$ reachable if, for every $y \in X_{k}^Q$, there exists a continuous function $u \in C^0(\mathbb{R},\mathbb{R}^m)$ compactly supported in $\mathbb{R}_-$ such that $\pi(Q^{-1}*P*u)=y$.
\end{definition}

We state and prove a sufficient criterion for the $X^Q_k$ reachability in the proposition below.

\begin{proposition}\label{prop-Ckexact}
Consider $\Sigma^Q$ a pseudo-rational system of order zero. If there exists $\alpha>0$ such that, for every $p\in \mathbb{C}$,  
\begin{equation}
    \label{eq_corona}
\inf \left\{ \vertii{g^T\widehat{Q}(p)}+\vertii{g^T \widehat{P}(p)} \suchthat \mbox{$g\in \mathbb{C}^d$, $\|g\|=1$
 } \right\} \ge \alpha,
 \end{equation} then
$\Sigma^Q$ is $C^k$ exactly reachable for some integer $k$.
\end{proposition}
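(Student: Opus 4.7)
The plan is to imitate the motion-planning construction of Proposition~\ref{prop_suffi} using a left-Bézout identity whose coefficients are distributions of higher (but finite) order rather than measures. The first and essential step is to convert the corona estimate \eqref{eq_corona} into a distributional identity
\[
Q*R + P*S = I_d\,\delta_0,
\]
where $R$ and $S$ are matrices of distributions of order at most some integer $k$, both with compact support in $\mathbb{R}_-$. Since $Q$ and $P$ are Radon-valued matrices with compact support in $\mathbb{R}_-$, Paley--Wiener tells us that $\widehat{Q}$ and $\widehat{P}$ are entire matrices of exponential type whose indicator diagrams sit in a closed left half-plane, and the assumption \eqref{eq_corona} is exactly the uniform left corona condition on the augmented matrix $[\widehat{Q}(p)\ \widehat{P}(p)]$. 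A matrix-valued corona theorem for entire functions of exponential type, which can be obtained for instance by a Berndtsson-style $\overline{\partial}$ argument or via the Ehrenpreis--Palamodov fundamental principle, then furnishes $\widehat{R}(p),\widehat{S}(p)$ entire of exponential type, with polynomial growth of some degree $k$ along the imaginary axis, solving $\widehat{Q}\widehat{R}+\widehat{P}\widehat{S}=I_d$. Applying Paley--Wiener--Schwartz and tracking the types (to guarantee supports in $\mathbb{R}_-$) produces $R$ and $S$ as required.

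With the Bézout identity in hand, the rest copies the scheme of Proposition~\ref{prop_suffi}. Given $y \in X^Q_k$, I would pick a $C^k$ extension $\tilde{y}$ of $y$ to $\mathbb{R}$ with $\pi(Q*\tilde{y})=0$ (provided by \eqref{def_state_space}) and multiply it by a $C^k$ cutoff vanishing sufficiently far to the left. The resulting left-bounded $\tilde{y}$ still satisfies $\pi(Q*\tilde{y})=0$ because the piece removed is supported strictly to the left of the origin; combined with $\pi(Q*\tilde{y})=0$, one concludes that $Q*\tilde{y}$ is a $C^k$ function with $\supp(Q*\tilde{y})\subset\mathbb{R}_-$ and compact support. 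Setting
\[
u := S*Q*\tilde{y},
\]
the convolution of a distribution of order $\leq k$ with a $C^k$ function of compact support yields a continuous function of compact support, and since both $S$ and $Q*\tilde{y}$ are supported in $\mathbb{R}_-$, one has $\supp u \subset \mathbb{R}_-$. Finally, using $P*S=I_d\delta_0-Q*R$,
\[
Q^{-1}*P*u = Q^{-1}*(I_d\delta_0-Q*R)*Q*\tilde{y} = \tilde{y}-R*Q*\tilde{y},
\]
and truncation to $\mathbb{R}_+$ gives $\pi(\tilde{y})-\pi(R*Q*\tilde{y})=y-0=y$, since $R*Q*\tilde{y}$ is supported in $\mathbb{R}_-$. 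This is precisely $C^k$ reachability in the sense of Definition~\ref{def:reach_regular}.

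The crux of the argument is the first step: the matrix corona/Bézout theorem for entire functions of exponential type. The scalar version is classical after Kelleher--Taylor and Berndtsson, but its matrix generalization requires either a careful $\overline{\partial}$-problem with matrix data or a reduction to the scalar case via row operations in the spirit of the Smith normal form, while simultaneously tracking both the exponential type (to keep all supports inside $\mathbb{R}_-$) and the polynomial growth on the imaginary axis (which fixes the order $k$ and hence the regularity space $X^Q_k$ for which the conclusion holds). Everything else is a straightforward transcription of the construction already used in Proposition~\ref{prop_suffi}, with the cutoff step and standard facts about distributional convolution handling the compact-support bookkeeping.
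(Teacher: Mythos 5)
Your overall architecture coincides with the paper's: first establish a B\'ezout identity $Q*R+P*S=I_d\,\delta_0$ with $R,S$ matrices of compactly supported, finite-order distributions in $\mathbb{R}_-$, then run the motion-planning construction $u:=S*Q*\tilde{y}$ of Proposition~\ref{prop_suffi}. Your second half is correct and matches the paper essentially verbatim: the order-$k$ hypothesis on $S$ against the $C^k$ regularity of $Q*\tilde{y}$ gives continuity of $u$, the support bookkeeping $\supp(R*Q*\tilde{y})\subset\mathbb{R}_-$ is right, and your cutoff is a harmless variant of the paper's choice of an extension $\tilde{y}$ with support bounded on the left.

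The divergence --- and the weak point --- is the first step, which you yourself identify as the crux but do not actually establish. You invoke a matrix corona theorem for entire functions of exponential type, ``obtainable by a Berndtsson-style $\overline{\partial}$ argument or via Ehrenpreis--Palamodov,'' and alternatively suggest a reduction to the scalar case ``in the spirit of the Smith normal form.'' Neither is proved or precisely cited: the $\overline{\partial}$/fundamental-principle route with matrix data, while plausible, is a substantial theorem in its own right, and a Smith-form reduction is not available since the Paley--Wiener algebra of $\mathcal{E}'(\mathbb{R}_-)$ is not an elementary divisor domain. The paper closes this step with two precise ingredients: the scalar corona theorem for $\mathcal{E}'(\mathbb{R}_-)$ (Theorem~\ref{corona_thm}, after H\"ormander and Yamamoto--Willems), which already delivers B\'ezout coefficients \emph{in} $\mathcal{E}'(\mathbb{R}_-)$ --- so the finite order and the support in $\mathbb{R}_-$ come for free and none of the ``tracking of types'' you worry about is required --- together with Fuhrmann's trick, which is the correct inductive device for passing from the scalar corona theorem to a right inverse of the full-row-rank block matrix formed by $\widehat{Q}$ and $\widehat{P}$. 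Your reduction-to-scalar instinct points in the right direction, but as written the decisive step of the proof is asserted rather than demonstrated.
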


\begin{proof} Assume the existence of two matrices $R$ and $S$, with entries in $\mathcal{E}'(\mathbb{R}_-)$ such that 
\begin{equation}
    \label{eq:app_cont0bisbisbeuh}
        Q*R+P*S
        =
        I_d \delta_0.
    \end{equation}
Let $y \in X^Q_k$ with $k$ the order of the distribution $S$. Consider a continuation $\tilde{y} \in C^k(\mathbb{R},\mathbb{R}^d)$ of $y$ such that $\tilde{y}=y$ on $\mathbb{R}_+$ and the support of $\tilde{y}$ is bounded on the left. Similarly as in the proof of Proposition~\ref{prop_suffi} we can show that, if we define $u:=S * Q * \tilde{y}$, then $u \in C^0(\mathbb{R},\mathbb{R}^m)$ is compactly supported in $\mathbb{R}_-$ and
\begin{equation}
    \pi(Q^{-1}*P*u)=y.
\end{equation}

It remains to prove that, under Equation~\eqref{eq_corona}, we have the existence of two matrices $R$ and $S$ with entries in $\mathcal{E}'(\mathbb{R}_-)$ satisfying \eqref{eq:app_cont0bisbisbeuh}. By the trick of P. Furhmann \cite[proof Theorem 3.1]{fuhrmann1968corona}, it is sufficient to use the one-dimensional corona theorem~\ref{corona_thm}, achieving the proof of the proposition.

\end{proof}

\begin{remark}
Theorem~\ref{th_coprm_L1} and Proposition~\ref{prop-Ckexact} highlighted the difficulty of characterizing reachability with inputs and outputs in function spaces, as well as providing frequency domain criteria for reachability. An alternative approach to characterizing reachability, based on the left-coprimeness condition in more general algebras, involves extending the input and output spaces to include distributions. This approach is presented in \cite{YAMAMOTO201620}. Definition~\ref{def:reach_regular} and Proposition~\ref{prop-Ckexact} go a step further by deriving sufficient reachability criteria from this framework in function spaces, whereas the paper \cite{YAMAMOTO201620} exclusively considers inputs and outputs within distribution spaces.
\end{remark}

Pseudo-rational system of order zero are a framework suited to deal with controllability issues of difference delay systems as shown in \cite{chitour:hal-03827918} and \cite{fueyo-chitour}. As a natural application of the results stated in Sections~\ref{sec_min_time}-\ref{sec4}.

\section{Application to delay systems}
\label{sec:appli_delay7}
Let us consider a controlled difference delay system with distributed delays of the form
\begin{equation}
\label{system_lin_formel2}
 x(t)=\sum_{j=1}^NA_jx(t-\Lambda_j)+\int_0^{\Lambda_N} g(s) x(t-s)ds+Bu(t) , \qquad t \ge 0,
\end{equation}
where, given three positive integers $d$, $m$ and $N$, $g(\cdot)$ belongs to $L^{\infty}([0,\Lambda_N],\mathbb{R}^{d \times d}$) the space of uniformly bounded matrices on $[0,\Lambda_N]$, $A_1,\dotsc,A_N$ are fixed $d\times d$ 
matrices with real entries, the state $x$ and the control $u$ belong 
to $\mathbb{R}^d$ and $\mathbb{R}^m$ respectively, and $B$ is a fixed $d \times m$ matrix 
with real entries. Without loss of generality, the delays $0<\Lambda_1, \dotsc, \Lambda_N$ 
are positive real numbers so that $\Lambda_1< \dotsb <\Lambda_N$. System~\eqref{system_lin_formel2} appears typically in literature to study one dimensional hyperbolic partial differential equations, see \cite{auriol2019explicit}. As shown in \cite{yamamoto1989reachability} for controlled neutral difference equations, we have that controlled delay systems can be construed as pseudo-rational systems. More precisely, we interpret System~\eqref{system_lin_formel2} in the realization formalism described above: 
\begin{equation}
\label{syst_lin_avec_sortie}
\begin{dcases}
 x(t)=\sum_{j=1}^NA_jx(t-\Lambda_j)+\int_0^{\Lambda_N} g(s)x(t-s)ds+Bu(t),&\text{ for 
  $t\ge \inf \supp(u)$}, \\
  x(t)=0,&\text{ for 
  $t<\inf \supp(u)$},\\
y(t)=x(t-\Lambda_N),&\text{ for 
$t \in [0,+\infty)$},
\end{dcases}
\end{equation}
with $u \in (\Omega_{q})^m$. We introduce the two following Radon measures associated to System~\eqref{syst_lin_avec_sortie}:
\begin{equation}
\label{eq:defQ}
\begin{split}
Q & :=\delta_{-\Lambda_N} I_d- \sum_{j=1}^N \delta_{-\Lambda_N+\Lambda_j} A_j- \delta_{-\Lambda_N}*\tilde{g},\\
P & := B \delta_0,
\end{split}
\end{equation}
where $\tilde{g}$ is the extension of $g$ on $\mathbb{R}$ by zero on the set $(-\infty,0) \cup (\Lambda_N,+\infty)$. Thanks to Proposition~\ref{theorem_pseudo_rational}, we have that System~\eqref{syst_lin_avec_sortie} is a $\Sigma^Q$ pseudo-rational system of order zero. As a straighforward consequence of Theorem~\ref{bound_minimal_time_controllability} we have the following equivalence between the reachability notions.

\begin{theorem}
\label{theorem_reachability_dis_delay}
System~\eqref{syst_lin_avec_sortie} is $X^{Q,q}$ quasi-reachable (resp. reachable) if and only if it is  $X^{Q,q}$ quasi-reachable (resp. reachable) in time $\widetilde{T}$ for all $\tilde{T}>d \Lambda_N$. In particular, an upper bound on the minimal time of quasi-reachability (resp. reachability) is given by $d \Lambda_N$.
\end{theorem}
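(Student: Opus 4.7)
The plan is to apply Theorem~\ref{bound_minimal_time_controllability} directly to the pseudo-rational system of order zero specified by $Q$ and $P$ in~\eqref{eq:defQ}. The fact that System~\eqref{syst_lin_avec_sortie} genuinely fits into this framework (in particular, that $Q$ admits an inverse in $M_+(\mathbb{R})$ in the convolution sense) is already provided by the cited Proposition~\ref{theorem_pseudo_rational}, so the only remaining quantity to control is the infimum $l(\det(Q))$ of the support of $\det(Q)$. Once we establish $l(\det(Q)) \geq -d\Lambda_N$, the conclusion of Theorem~\ref{bound_minimal_time_controllability} yields both the equivalence between the various (quasi-)reachability notions and the upper bound $d\Lambda_N$ on the corresponding minimal times.

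The core computation is a support analysis. First I would observe that every entry of the matrix $Q$ defined in~\eqref{eq:defQ} is a Radon measure supported in $[-\Lambda_N,0]$. Indeed, the terms $\delta_{-\Lambda_N}$ and $\delta_{-\Lambda_N+\Lambda_j}$ are Dirac masses located at points of $[-\Lambda_N,0]$ since $0<\Lambda_j\leq \Lambda_N$, while $\tilde g$ is supported in $[0,\Lambda_N]$ by construction, so each entry of $\delta_{-\Lambda_N}*\tilde g$ has support in $[-\Lambda_N,0]$. Thus every entry $Q_{ij}$ satisfies $\supp(Q_{ij})\subset[-\Lambda_N,0]$.

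Next I would expand the determinant via the Leibniz formula in the convolution algebra,
\begin{equation*}
\det(Q)=\sum_{\sigma\in \mathfrak{S}_d} \mathrm{sgn}(\sigma)\, Q_{1,\sigma(1)}*\cdots *Q_{d,\sigma(d)}.
\end{equation*}
Each summand is a convolution of $d$ Radon measures supported in $[-\Lambda_N,0]$ and therefore has support in $[-d\Lambda_N,0]$, using the standard fact that the support of a convolution is contained in the sum of the supports. Summing over $\sigma$ preserves this inclusion, so $\supp(\det(Q))\subset[-d\Lambda_N,0]$, which gives $l(\det(Q))\geq -d\Lambda_N$ and hence $-l(\det(Q))\leq d\Lambda_N$. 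Theorem~\ref{bound_minimal_time_controllability} then immediately produces the desired statement.

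I do not expect any serious obstacle in this argument; it is essentially a bookkeeping exercise on convolution supports, once the machinery of pseudo-rational systems of order zero is in place. The only point that requires a little care is the verification that the presence of the distributed-delay term $\int_0^{\Lambda_N} g(s)x(t-s)\,ds$ does not enlarge the support of any $Q_{ij}$ beyond $[-\Lambda_N,0]$; this is precisely why $\tilde g$ is extended by zero outside $[0,\Lambda_N]$ in the definition~\eqref{eq:defQ}, and why shifting by $\delta_{-\Lambda_N}$ places this contribution inside $[-\Lambda_N,0]$ as needed.
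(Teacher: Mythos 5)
Your proposal is correct and follows exactly the paper's route: the paper's own proof consists of the single observation that $\supp(\det(Q))\subset[-d\Lambda_N,0]$ followed by an appeal to Theorem~\ref{bound_minimal_time_controllability}. You simply supply the details the paper leaves implicit (the Leibniz expansion of the determinant in the convolution algebra and the inclusion of the support of a convolution in the sum of the supports), which is a welcome but not substantively different elaboration.
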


\begin{proof}
  Since the support of $\det(Q)$ belongs to $[-d \Lambda_N,0]$, we have that the quasi-reachability (resp. reachability) is equivalent to the quasi-reachability (resp. reachability) in time $T$ for all $T>d \Lambda_N$ from Theorem~\ref{bound_minimal_time_controllability}. 
\end{proof}

\begin{remark}
The result of Theorem~\ref{theorem_reachability_dis_delay} was established in \cite{chitour:hal-03827918} for the more restrictive case where the pseudo-rational systems considered were difference delay systems with a finite number of delays (\( g \equiv 0 \) in \eqref{system_lin_formel2}), as described in \eqref{eq:diff_delay_without}. The proof in \cite{chitour:hal-03827918} was complex and highly computational, whereas the proof of Theorem~\ref{bound_minimal_time_controllability}, from which Theorem~\ref{theorem_reachability_dis_delay} is derived, is much simpler.

\end{remark}

Secondly, we are able to exploit the left-coprimeness condition to obtain reachability results for System~\eqref{syst_lin_avec_sortie}. We introduce some notations to present the results. The Laplace transform of $Q$ and $P$ are given by 
\begin{align}
\begin{aligned}
\widehat{Q}(p)&=e^{p \Lambda_N}\left(I_d-\sum_{j=1}^N A_j e^{-p \Lambda_j}-\int_0^{\Lambda_N} g(s) e^{-ps}ds \right),\quad \forall p \in \mathbb{C},\\
\widehat{P}(p)&=B,\quad \forall p \in \mathbb{C}.
\end{aligned}
\end{align}

We denote by \( \overline{\widehat{Q}(\cdot)} \) the closure of the holomorphic map \( \widehat{Q}(\cdot) \), and by \( \rank \left[ F, B \right] \) the rank of the concatenated matrices \( F \in \overline{\widehat{Q}(\cdot)} \) and \( B \).

\begin{theorem}
\label{th:main_result_dist}
Consider the assertion:
\begin{enumerate}[(a)]
    \item \label{item1bis}$\displaystyle \rank \left[F,B\right]=n$ for all $p \in \mathbb{C}$ and $F\in \overline{\widehat{Q}(\cdot)}$.
\end{enumerate}
If Item~\ref{item1bis} is satisfied, then System~\eqref{syst_lin_avec_sortie} is $C^k$ reachable for some integer $k$. Conversely, if System~\eqref{syst_lin_avec_sortie} is $X^{Q,1}$ reachable then it satisfies Item~\ref{item1bis}.
\end{theorem}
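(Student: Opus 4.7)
The proof splits naturally along the two directions, and both rely on the explicit delay structure of $\widehat{Q}(p)$.

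For the sufficient direction, I would invoke Proposition~\ref{prop-Ckexact} and verify the uniform corona bound \eqref{eq_corona}. Arguing by contradiction, suppose there exist $(p_n) \subset \mathbb{C}$ and $(v_n) \subset \mathbb{C}^d$ with $\|v_n\|=1$ and $\|v_n^T \widehat{Q}(p_n)\| + \|v_n^T B\| \to 0$. After passing to a subsequence, $v_n \to v$ with $\|v\|=1$, so $v^T B = 0$. The key observation is that $\Re(p_n)$ cannot tend to $+\infty$: since
\begin{equation*}
\widehat{Q}(p) = e^{p\Lambda_N}\left(I_d - \sum_{j=1}^N A_j e^{-p\Lambda_j} - \int_0^{\Lambda_N} g(s) e^{-ps}\,ds\right),
\end{equation*}
the bracket converges to $I_d$ when $\Re(p) \to +\infty$, so $\|v_n^T \widehat{Q}(p_n)\|$ would blow up like $|e^{p_n \Lambda_N}|$. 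Hence $\Re(p_n)$ is bounded above, which in turn makes $\widehat{Q}(p_n)$ bounded in norm, and a further extraction yields $\widehat{Q}(p_n) \to F \in \overline{\widehat{Q}(\cdot)}$. Passing to the limit gives $v^T F = 0 = v^T B$, contradicting Item~\ref{item1bis}.

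For the necessary direction, Theorem~\ref{th_coprm_L1} provides matrices $R$ and $S$ with entries in $M(\mathbb{R}_-)$ such that $Q \ast R + P \ast S = I_d \delta_0$. Taking Laplace transforms yields the pointwise algebraic identity
\begin{equation*}
\widehat{Q}(p)\widehat{R}(p) + B\widehat{S}(p) = I_d, \qquad p \in \mathbb{C},
\end{equation*}
which immediately gives $\rank[\widehat{Q}(p), B] = d$ for every $p$. To promote this to an arbitrary $F \in \overline{\widehat{Q}(\cdot)}$, pick $(p_n)$ with $\widehat{Q}(p_n) \to F$ and assume, toward contradiction, the existence of a unit vector $v$ with $v^T F = 0$ and $v^T B = 0$. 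Multiplying the identity on the left by $v^T$ produces $v^T \widehat{Q}(p_n) \widehat{R}(p_n) = v^T$, so $1 \leq \|v^T \widehat{Q}(p_n)\| \cdot \|\widehat{R}(p_n)\|$. As in the first part, the finiteness of $F$ forces $\Re(p_n)$ to be bounded above by some $M$. Since the entries of $R$ are Radon measures supported in some fixed $[-T', 0]$, the elementary estimate $|\widehat{R}_{ij}(p)| \leq \|R_{ij}\|\,e^{T' \max(0, \Re(p))}$ keeps $\|\widehat{R}(p_n)\|$ bounded, while $v^T \widehat{Q}(p_n) \to v^T F = 0$, contradicting the inequality above.

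The main obstacle in both directions is handling sequences $(p_n)$ that might escape to infinity in $\mathbb{C}$, since the closure $\overline{\widehat{Q}(\cdot)}$ is taken in the matrix norm topology and a priori does not capture limits for which $\widehat{Q}(p_n)$ itself diverges. The unlocking ingredient, common to both steps, is that the prefactor $e^{p\Lambda_N}$ in $\widehat{Q}(p)$ forces $\widehat{Q}(p_n)$ to diverge whenever $\Re(p_n) \to +\infty$; combined with the Paley–Wiener type bounds coming from the compact support of $R$ and $S$ in $\mathbb{R}_-$, this reduces the analysis to sequences with $\Re(p_n)$ bounded above, on which $\widehat{Q}, \widehat{R}$ and $\widehat{S}$ are all uniformly bounded.
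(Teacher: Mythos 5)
Your proof is correct and follows essentially the same route as the paper: the sufficient direction verifies the corona bound of Proposition~\ref{prop-Ckexact} by contradiction, ruling out $\Re(p_n)\to+\infty$ via the prefactor $e^{p\Lambda_N}$ and extracting a limit $F\in\overline{\widehat{Q}(\cdot)}$, while the necessary direction combines Theorem~\ref{th_coprm_L1} with the Laplace-transformed B\'ezout identity and Paley--Wiener-type bounds on $\widehat{R}$ and $\widehat{S}$. The only cosmetic differences are that you merge the paper's separate cases $\Re(p_n)\to-\infty$ and $\Re(p_n)$ bounded into a single compactness argument on the half-plane $\{\Re(p)\le M\}$, and you obtain the bound on $\widehat{R}$ by a direct compact-support estimate rather than citing Theorem~\ref{th_Paley}.
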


\begin{proof}
    First, we prove that Item~\ref{item1bis} implies the assumption of Proposition~\ref{prop-Ckexact}. The proof proceeds by contradiction, assuming that Equation~\eqref{eq_corona} is not satisfied. Thus we would have two sequences $(g_n)_{n \in \mathbb{N}} \in \mathbb{C}^d$ and $(p_n)_{n \in \mathbb{N}} \in \mathbb{C}$ such that $\|g_n\|=1$, $g_n^T \widehat{Q}(p_n) \to 0$ and $g_n^T B \to 0$ when ${n\to+\infty }$. We have three cases:
    \begin{enumerate}[i)]

        \item If $\Re(p_n) \to - \infty$ when ${n\to+\infty }$ then by the Lebesgue dominated convergence theorem we would have that $\lim_{n\to+\infty}\widehat{Q}(p_n)= -A_N$. Furthermore, since $(g_n)_{n \in \mathbb{N}}$ is a bounded sequence in $\mathbb{C}^d$, we can, up to a subsequence, find a $g \in \mathbb{C}^d$ such that $\|g \|=1$ and $\lim_{n \to + \infty} g_n= g$. Thus, we have $-g^TA_N=0$ and $g^T B=0$ which is an inconsistency with Item~\ref{item1bis};
        
        \item Since $\widehat{Q}(p)$ is equivalent to $e^{p \Lambda_N}I_d$ when the real part of $p$ tends to $+\infty$, it is not possible to have $\Re(p_n) \to + \infty$ when $n\to+\infty $. 
        
        \item Assume that the real part of $(p_n)_{n\in \mathbb{N}}$ is uniformly bounded. We have that $\widehat{Q}(\cdot)$ is bounded on all bounded vertical strip so that, up to a subsequence, there exists $F$ such that $\lim_{n \to +\infty} \widehat{Q}(p_n) = F$. We can also, up to a subsequence, find a $g \in \mathbb{C}$ such that $g_n {\longrightarrow} g$ and $\|g\|=1$ when $n \to + \infty$. We deduce that $g^T F=0$ and $g^T B=0$ which is a contradiction of Item~\ref{item1bis}.
    \end{enumerate}
   Thus, the assumption of Proposition~\ref{prop-Ckexact} is satisfied, allowing us to apply the result of the same proposition, which leads to the conclusion of the first part of the theorem.

    Assume now that System~\eqref{syst_lin_avec_sortie} is $X^{Q,1}$ reachable. Then it is a left-coprime system by Theorem~\ref{th_coprm_L1}. We apply the Laplace transform in Equation~\eqref{eq:app_cont0bisbis} and multiplying by $g^T\in \mathbb{C}^d$ on the left, we get
    \begin{equation}
    \label{eq:app_cont0bisbisbis}
        g^T \widehat{Q}(p)\widehat{R}(p)+g^T B\widehat{S}(p)
        = g^T, \quad \forall p \in \mathbb{C},
    \end{equation} 
    for some matrices $R$ and $S$ with Radon measure entries in $M(\mathbb{R}_-)$. If Item~\ref{item1bis} was not satisfied, then there would exists a nonzero $g\in \mathbb{C}^d$, a matrix $F$ and a sequence $(p_n)_{n \in \mathbb{N}} \in \mathbb{C}$, with a real part uniformly bounded above, such that $g^T F=0$, $g^T B=0$ and $\lim_{n \to + \infty}\widehat{Q}(p_n) = F$. From \eqref{eq:app_cont0bisbisbis}, we get 
    \begin{equation}
\label{eq:app_cont0bisbisbisbis}
        g^T \widehat{Q}(p_n)\widehat{R}(p_n)+g^T B\widehat{S}(p_n)
        = g^T, \quad \forall n \in \mathbb{N}.
    \end{equation} 
     By the estimate of the Paley-Wiener-Schwartz theorem, see Theorem~\ref{th_Paley}, we get that the maps $\widehat{R}(\cdot)$ and $\widehat{S}(\cdot)$ are uniformly bounded in the half space $\{p \in \mathbb{C}|\, \Re(p) \le \alpha\}$ for each $\alpha \in \mathbb{R}$. Thus we deduce that $\lim_{n \to + \infty} g^T B\widehat{S}(p_n) = 0 $ and $ \lim_{n \to + \infty} g^T \widehat{Q}(p_n)\widehat{R}(p_n)= 0 $. Taking the limit in \eqref{eq:app_cont0bisbisbisbis} when $n$ tends to $+\infty$, we would obtain that $g^T=0$.
    It is a contradiction so that Item~\ref{item1bis} holds true.
\end{proof}

\begin{remark}
    Item~(a) of Theorem~\ref{th:main_result_dist} is a Hautus test that is very similar to the one given in \cite{hautus1969controllability} for ordinary differential equations.
\end{remark}

\begin{remark}
Theorem~\ref{theorem_reachability_dis_delay} and Theorem~\ref{th:main_result_dist} were known for difference delay systems without distributed delays; see \cite{chitour:hal-03827918}. The pseudo-rational of order zero systems framework developed in this article extended the results of \cite{chitour:hal-03827918} in the more complicated case where distributed delays occur in difference delay systems. 
\end{remark}

\section*{Acknowledgement}
The author sincerely thanks the anonymous reviewers for their careful reading and insightful suggestions, which significantly contributed to the improvement of the manuscript.

\bibliographystyle{elsarticle-num} 
\bibliography{ifacconf}

\appendix

\section{Invertibility of a Radon measure linked to a difference delay systems with distributed delays}

We show the invertibility of the matrix $Q$ associated with difference delay systems with distributed delays.

\begin{proposition}
\label{theorem_pseudo_rational}
   The radon measure $Q$ defined in \eqref{eq:defQ} is invertible over $M_+(\mathbb{R})$.
\end{proposition}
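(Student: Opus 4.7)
The plan is to invert $Q$ by a two-step convolution series argument. First, using $\delta_{-\Lambda_N+\Lambda_j} = \delta_{-\Lambda_N} * \delta_{\Lambda_j}$ and the fact that $\tilde{g}$ has support in $[0,\Lambda_N]$, I factor
\[
Q = \delta_{-\Lambda_N} * M, \qquad M := I_d \delta_0 - K_d - K_c,
\]
where $K_d := \sum_{j=1}^N \delta_{\Lambda_j} A_j$ is the atomic part and $K_c := \tilde{g}$ the absolutely continuous part, both Radon measures supported in $[0,\Lambda_N]$. It will suffice to construct $M^{-1} \in M_+(\mathbb{R})$, since then $Q^{-1} := M^{-1} * \delta_{\Lambda_N}$ automatically belongs to $M_+(\mathbb{R})$.

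Next, I invert $K_d$ and $K_c$ separately. Because $\Lambda_1 > 0$, the $n$-th convolution power $K_d^{*n}$ is supported in $[n\Lambda_1, n\Lambda_N]$, so the Neumann series $N := \sum_{n \ge 0} K_d^{*n}$ is locally finite and defines an atomic Radon measure in $M_+(\mathbb{R})$ satisfying $(I_d\delta_0 - K_d) * N = N * (I_d\delta_0 - K_d) = I_d \delta_0$. I then use the factorization
\[
M = (I_d\delta_0 - K_d) * (I_d\delta_0 - h), \qquad h := N * K_c,
\]
which follows directly from the identity above. The key observation is that $h$ is a locally bounded, matrix-valued function on $\mathbb{R}_+$: on each compact $[0,T]$, only finitely many terms $K_d^{*n} * K_c$ (those with $n \le T/\Lambda_1$) contribute, and each is a finite sum of shifts of the $L^\infty$ function $\tilde{g}$. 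This places the remaining problem in the classical Volterra setting: the standard estimate $\|h^{*k}(t)\| \le M_T^k \, t^{k-1}/(k-1)!$ on $[0,T]$, where $M_T := \|h\|_{L^\infty([0,T])}$, yields absolute local convergence of $\sum_{k \ge 0} h^{*k}$, and its sum defines an element of $M_+(\mathbb{R})$ (a Dirac at $0$ plus a locally bounded function supported in $\mathbb{R}_+$) that inverts $I_d\delta_0 - h$. Composing gives $M^{-1} = (I_d\delta_0 - h)^{-1} * N \in M_+(\mathbb{R})$, which completes the argument.

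The main obstacle I anticipate is that the continuous part $K_c = \tilde{g}$ has support starting at $0$, so the naive Neumann series $\sum_n (K_d + K_c)^{*n}$ is not locally finite, and the purely algebraic argument that works for difference delay systems without distributed delays ($g \equiv 0$) no longer applies. The factorization above is specifically designed to isolate the atomic shifts, handled by support considerations that exploit $\Lambda_1 > 0$, from the absolutely continuous perturbation, handled by the standard Volterra exponential estimate.
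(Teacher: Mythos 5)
Your proof is correct, and it takes a genuinely different route from the paper's. Both arguments factor out $\delta_{-\Lambda_N}$ and then invert the remaining perturbation of $I_d\delta_0$ in two stages, but the decompositions differ. The paper splits the distributed kernel itself, writing $\tilde{g}=\tilde{g}_1+\tilde{g}_2$ with $\|\tilde{g}_1\|_{L^1}<1$ and $\mathrm{supp}\,\tilde{g}_1\subseteq[0,\Lambda_1-\epsilon]$, $\mathrm{supp}\,\tilde{g}_2\subseteq[\Lambda_1-\epsilon,\Lambda_N]$; it inverts $I_d\delta_0-\tilde{g}_1$ by a contraction (geometric) Neumann series in $L^1$, and lumps $\tilde{g}_2$ together with the atomic part $\sum_j\delta_{\Lambda_j}A_j$ into a measure $G$ supported away from the origin, whose Neumann series is locally finite. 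You instead keep $\tilde{g}$ intact, invert the purely atomic part first (locally finite thanks to $\Lambda_1>0$), and absorb all of $\tilde{g}$ into the Volterra kernel $h=N*K_c$, which you control with the classical factorial resolvent estimate, so no smallness is needed anywhere. Your route avoids the $\epsilon$ and $t_0$ bookkeeping and reduces the continuous part to standard Volterra theory; the paper's route relies only on absolute continuity of the integral of $g$, so it would carry over verbatim to $g\in L^1$, whereas your bound $M_T=\|h\|_{L^\infty([0,T])}$ exploits the standing assumption $g\in L^\infty$ (under which both arguments are equally valid). I checked the one point where non-commutativity could bite: $(I_d\delta_0-K_d)*(I_d\delta_0-N*K_c)=I_d\delta_0-K_d-(I_d\delta_0-K_d)*N*K_c=I_d\delta_0-K_d-K_c$, so your factorization, and hence the composed inverse $M^{-1}=(I_d\delta_0-h)^{-1}*N$, is in order; since all supports are bounded on the left, the convolutions involved are well defined and associative.
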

\begin{proof}
 First, by a simple property of the space $L^1(\mathbb{R},\mathbb{R}^{d \times d})$ and the definition of $\tilde{g}$, we have for all $t_0 \in [0,\Lambda_N]$:
 \begin{equation}
     \int_{-\infty}^{\infty} \|\tilde{g}(t)\|dt=\int_{0}^{\Lambda_N} \|\tilde{g}(t)\|dt=\int_{0}^{t_0} \|\tilde{g}(t)\|dt+\int_{t_0}^{\Lambda_N} \|\tilde{g}(t)\|dt.
 \end{equation}
 Taking $t_0$ enough small, we can have the quantity $\int_{0}^{t_0} \|\tilde{g}(t)\|dt$ strictly smaller than one. In another words, for some $\epsilon \in [0,\Lambda_1)$, we can decompose $\tilde{g}$ as
    \begin{equation}
    \label{eq:th_pseudo_rat1}
   \tilde{g} =\tilde{g}_1+\tilde{g}_2,
     \end{equation}
    where $\tilde{g}_1,\tilde{g}_2 \in $, $\vertii{\tilde{g}_1}_1<1$, $\mathrm{supp}\, \tilde{g}_1 \subseteq [0,\Lambda_1-\epsilon]$ and $\mathrm{supp}\, \tilde{g}_2 \subseteq [\Lambda_1-\epsilon,\Lambda_N]$. We can also write $Q$ as
\begin{equation}
\label{eq:proof_pseudo}
    Q=I_d\delta_{-\Lambda_N}*(I_d\delta_0-\tilde{g}_1)*(I_d \delta_0+G),
\end{equation}
where
\begin{equation}
\label{eq:proof_pseudo1}
    G=-(I_d \delta_0-\tilde{g}_1)^{-1}*\left( \sum_{j=1}^N \delta_{\Lambda_j} A_j+\tilde{g}_2 \right) \mbox{ and } (I_d \delta_0 -\tilde{g}_1)^{-1}=I_d \delta_0+\sum_{j=1}^{+\infty} \tilde{g}_1^{\ast j}.
\end{equation}
The notation $\phantom{}^{*j}$ introduced in \eqref{eq:proof_pseudo1} denotes the convolution product repeated $j \in \mathbb{N}^*$ times.
Thus we have that the Radon measure $Q^{-1}=I_d \delta_{\Lambda_N}*(I_d \delta_0+G)^{-1}*(I_d \delta_0-\tilde{g}_1)^{-1}$ is an inverse of $Q$ over $M_+(\mathbb{R})$ with
\begin{equation}
\label{eq:proof_pseudo2}
   (I_d\delta_0+G)^{-1}=I_d \delta_0+\sum_{j=1}^{+\infty}(-1)^j G^{\ast j}.
    \end{equation}

\end{proof}

\section{Paley-Wiener-Schwartz theorem}

We state the Paley-Wiener-Schwartz for distributions with compact support in $\mathbb{R}_-$, see for instance \cite[Theorem 7.3.1]{lars1990analysis}.

\begin{theorem}(Paley-Wiener-Schwartz)
\label{th_Paley}
A holomorphic function $f(\cdot)$ is the Laplace transform of a distribution $\phi \in \mathcal{E}'(\mathbb{R}_-)$ if and only if, for some $C,a \in \mathbb{R}_+$ and $m$ integer, it satisfies the following inequalities
\begin{align}
\begin{aligned}
    \abs{f(s)} &\le C (1+\abs{s})^m e^{a \Re(s)},\quad s \ge 0,\\
    \abs{f(s)} &\le C (1+\abs{s})^m, \quad s \le 0.
    \end{aligned}
\end{align} 
In particular, if $\hat{f}(\cdot)$ is the Laplace transform a Radon measure $f \in M(\mathbb{R}_-)$ then we have the following estimate for some positive real $\tilde{a}$ and $\tilde{C}$
\begin{align}
\begin{aligned}
    \abs{f(s)} &\le \tilde{C}  e^{\tilde{a} \Re(s)},\quad s \ge 0,\\
    \abs{f(s)} &\le \tilde{C} , \quad s \le 0.
    \end{aligned}
\end{align}

\end{theorem}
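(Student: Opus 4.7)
The plan is to prove this as a specialization of the classical Paley-Wiener-Schwartz theorem, with the Laplace variable $s$ playing the role of $-i\xi$ in the usual Fourier formulation. I would split the argument into the two directions of the equivalence and then derive the Radon measure refinement as the order-zero special case.

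For the forward implication, assume $\phi \in \mathcal{E}'(\mathbb{R}_-)$ with $\supp \phi \subseteq [-a,0]$. By the structure theorem for compactly supported distributions, $\phi$ has finite order $m$, so there exists $C > 0$ with
$$|\langle \phi, \psi \rangle| \le C \sum_{k \le m} \sup_{t \in [-a,0]} |\psi^{(k)}(t)|$$
for every $\psi \in C^\infty(\mathbb{R})$. I would apply this to $\psi(t) = \chi(t) e^{-st}$, where $\chi$ is a smooth cutoff equal to $1$ on a neighborhood of $[-a,0]$. The $k$-th derivatives of $e^{-st}$ produce factors $(-s)^k$ bounded by $(1+|s|)^m$, while $|e^{-st}| = e^{-t\Re(s)}$ on $[-a,0]$ is bounded by $e^{a\Re(s)}$ when $\Re(s) \ge 0$ and by $1$ when $\Re(s) \le 0$. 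This yields the two announced inequalities. For the Radon measure refinement, since $f \in M(\mathbb{R}_-)$ has order zero (by the Riesz representation theorem), the seminorm estimate holds with $m = 0$, so no polynomial prefactor appears and the sharper bounds follow by the same computation.

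For the converse, the bound along the imaginary axis, $|f(i\xi)| \le C(1+|\xi|)^m$, shows that $\xi \mapsto f(i\xi)$ is a tempered distribution; I would define $\phi$ as its inverse Fourier transform and must show $\supp \phi \subseteq [-a,0]$. For a test function $\psi \in C^\infty_c(\mathbb{R})$ supported in $(0,+\infty)$ (respectively in $(-\infty,-a)$), the pairing $\langle \phi, \psi \rangle$ equals an integral of $f(s)\,\widetilde\psi(s)$ over the imaginary axis, where $\widetilde\psi$ is the entire Fourier-Laplace transform of $\psi$ and decays rapidly in the half-plane opposite to $\supp \psi$. I would then invoke Cauchy's theorem and shift the contour to $\Re(s) = -\infty$ in the first case (where $|f|$ is only polynomially large and $\widetilde\psi$ decays exponentially on the right of the support of $\psi$) and to $\Re(s) = +\infty$ in the second (where the growth $e^{a\Re(s)}$ is beaten by the exponential decay of $\widetilde\psi$ on the left of $-a$). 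The main obstacle will be the rigorous justification of this contour deformation, namely showing that the horizontal connecting pieces contribute negligibly as they are pushed to infinity; this requires carefully combining the polynomial control on $f$ along vertical strips with the Schwartz-class decay of $\widetilde\psi$ in the relevant half-plane, and an argument based on increasing $|\Im(s)|$ on the horizontal segments together with the compactness of $\supp \psi$.
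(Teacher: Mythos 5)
The paper offers no proof of this statement: it is quoted verbatim from H\"ormander \cite[Theorem 7.3.1]{lars1990analysis} and used as a black box (only the Radon-measure specialization is actually invoked, in the proof of Theorem~\ref{th:main_result_dist}). Your outline reproduces the classical argument behind that citation --- structure theorem plus a cutoff for the direct implication, inverse Fourier transform plus contour shifting for the converse --- and the converse direction in particular is correctly organized, including your honest identification of the contour-deformation justification as the technical crux (choosing the decay order $N$ of $\widetilde\psi$ larger than $m+1$ and using uniformity of the bounds on vertical strips does close it).

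There is one soft spot in your forward direction. The seminorm estimate $\abs{\scalprod{\phi,\psi}}\le C\sum_{k\le m}\sup_{t\in[-a,0]}\abs{\psi^{(k)}(t)}$ with the supremum taken over the \emph{exact} support $[-a,0]$ is false for general compactly supported distributions; the structure theorem only gives it with the supremum over a neighbourhood $[-a-\epsilon,\epsilon]$. Running your computation on that neighbourhood yields $\abs{f(s)}\le C_\epsilon(1+\abs{s})^m e^{(a+\epsilon)\Re(s)}$ for $\Re(s)\ge0$ (harmless, since $a$ is not fixed) but $\abs{f(s)}\le C_\epsilon(1+\abs{s})^m e^{\epsilon\abs{\Re(s)}}$ for $\Re(s)\le0$, which is strictly weaker than the stated bound and, as it happens, too weak for the use made of the theorem in Theorem~\ref{th:main_result_dist} (uniform boundedness of $\widehat R,\widehat S$ on left half-planes). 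The standard repair is H\"ormander's device of taking a cutoff $\chi_\delta$ equal to $1$ on a $\delta$-neighbourhood of the support with $\abs{\chi_\delta^{(j)}}\le C_j\delta^{-j}$ and optimizing $\delta\sim(1+\abs{s})^{-1}$, which recovers the sharp exponent. For the Radon-measure refinement --- the only case the paper needs --- no cutoff is required at all: one bounds $\abs{\hat f(s)}\le\int_{[-a,0]}e^{-t\Re(s)}\,d\abs{f}(t)$ directly over the exact support, giving $\tilde C=\abs{f}([-a,0])$ and the clean bounds immediately; your phrase ``the same computation with $m=0$'' should be replaced by this direct total-variation estimate.
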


\section{A corona theorem for distributions with compact support in $\mathbb{R}_-$}

We recall a corona theorem, initially established in \cite{Yamamoto_Willems} for the scalar case with two elements, and later extended to multiple elements by \cite{maad2011generators}, who formulated the result for \( \mathcal{E}'(\mathbb{R}_+) \), whereas Yamamoto and Willems originally formulated it for \( \mathcal{E}'(\mathbb{R}_-) \). We do not use in the full generality this theorem but an easier subversion which is just a straighforward application of the paper of Hörmander \cite{hormander_paper}. 

\begin{theorem}
\label{corona_thm}
    Let $m$ an integer and $f_1,...,f_m \in \mathcal{E}'(\mathbb{R}_-)$ such that there exists $\alpha>0$ satisfying 
\begin{equation}
    \sum_{k=1}^m \abs{\hat{f_k}(p)}\ge \alpha,\quad \forall p \in \mathbb{C}.
\end{equation}
Then there exist $g_1,...,g_m \in \mathcal{E}'(\mathbb{R}_-) $ such that 
\begin{equation}
    \sum_{k=1}^m f_k* g_k=\delta_0.
\end{equation}
\end{theorem}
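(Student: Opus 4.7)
The plan is to pass from compactly supported distributions to their Laplace transforms via Paley-Wiener-Schwartz (Theorem~\ref{th_Paley}), apply Hörmander's generator theorem for weighted algebras of entire functions to produce an entire Bézout identity $\sum_k \hat f_k\, G_k = 1$, and then transfer back by the converse direction of Paley-Wiener-Schwartz.

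First, I would set up the appropriate weighted algebra. For a plurisubharmonic weight $p(z) := \max(0,\Re(z)) + \log(1+|z|^2)$ on $\mathbb{C}$, let $A_p$ denote the algebra of entire functions $F$ for which there exist constants $C,N \ge 0$ (depending on $F$) such that $|F(z)| \le C e^{N p(z)}$ for all $z \in \mathbb{C}$. The Paley-Wiener-Schwartz theorem identifies $A_p$ precisely as the image of $\mathcal{E}'(\mathbb{R}_-)$ under the Laplace transform: the two estimates in Theorem~\ref{th_Paley} repackage as the single bound
\begin{equation*}
|\hat\phi(z)| \le C(1+|z|)^m e^{a \max(0,\Re(z))},\qquad z\in\mathbb{C},
\end{equation*}
which is of the form $C' e^{N p(z)}$ once the polynomial factor is absorbed into the $\log(1+|z|^2)$ part of $p$. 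Conversely, any entire function satisfying such a bound is the Laplace transform of a distribution in $\mathcal{E}'(\mathbb{R}_-)$. In particular, each $\hat f_k$ lies in $A_p$.

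Next I would invoke the main theorem of \cite{hormander_paper}: given $F_1,\dots,F_m \in A_p$ satisfying a lower bound
\begin{equation*}
\sum_{k=1}^m |F_k(z)| \ge \varepsilon\, e^{-N' p(z)},\qquad z\in\mathbb{C},
\end{equation*}
for some $\varepsilon>0$ and some $N'\ge 0$, there exist $G_1,\dots,G_m \in A_p$ with $\sum_{k=1}^m F_k G_k \equiv 1$ on $\mathbb{C}$. The construction is based on solving a $\bar\partial$-problem with $L^2$-estimates against a weight derived from $p$. In our setting, the hypothesis $\sum_k |\hat f_k(z)| \ge \alpha > 0$ is vastly stronger than the requirement above (take $\varepsilon=\alpha$ and $N'=0$), so Hörmander's result applies and produces entire functions $G_k \in A_p$ with $\sum_{k=1}^m \hat f_k(z)\, G_k(z) = 1$ for every $z\in\mathbb{C}$.

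Finally, by the converse direction of Paley-Wiener-Schwartz, each $G_k$ is the Laplace transform of a unique $g_k \in \mathcal{E}'(\mathbb{R}_-)$. Since the Laplace transform turns convolution of compactly supported distributions into pointwise multiplication of entire functions and sends $\delta_0$ to the constant $1$, the entire Bézout identity $\sum_k \hat f_k\, \widehat{g_k} = 1$ inverts to $\sum_{k=1}^m f_k * g_k = \delta_0$, as required. The only substantive ingredient is Hörmander's $\bar\partial$-based construction of generators in $A_p$; everything else is the standard Paley-Wiener dictionary, which is what the author means by calling the statement a straightforward application of \cite{hormander_paper}. The main obstacle to keep in mind is merely the verification that the weight $p$ chosen above genuinely matches the Paley-Wiener class of $\mathcal{E}'(\mathbb{R}_-)$ and satisfies Hörmander's structural conditions (plurisubharmonicity and $\log(1+|z|^2) = O(p(z))$), both of which are immediate from the definition.
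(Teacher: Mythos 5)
Your proposal is correct and takes exactly the approach the paper intends: the paper gives no written proof of Theorem~\ref{corona_thm}, deferring to \cite{hormander_paper}, and your argument is precisely that ``straightforward application'' --- identify the Laplace image of $\mathcal{E}'(\mathbb{R}_-)$ with Hörmander's algebra $A_p$ for the weight $p(z)=\max(0,\Re(z))+\log(1+|z|^2)$ via Theorem~\ref{th_Paley}, apply the generator theorem with the constant lower bound (so $N'=0$), and transfer back through the convolution--product dictionary. The only hypothesis of Hörmander's you leave unchecked is the local slow-variation condition on the weight (that $p(\zeta)\le C_3\,p(z)+C_4$ whenever $|\zeta-z|\le 1$), which is equally immediate for this choice of $p$.
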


\end{document}